\documentclass{amsart}
\usepackage{amsmath}
\usepackage{graphicx}
\usepackage{latexsym}

\parskip.05in

\newtheorem{thm}{Theorem}

\newtheorem{theorem}[thm]{Theorem}
\newtheorem{lemma}[thm]{Lemma}
\newtheorem{corollary}[thm]{Corollary}
\newtheorem{proposition}[thm]{Proposition}

\theoremstyle{definition}

\newtheorem{remark}[thm]{Remark}

\newtheorem{question}[thm]{Question}

\setlength\arraycolsep{2pt}

\newcommand{\CPb}{\overline{\mathbb{CP}}{}^{2}}
\newcommand{\CP}{{\mathbb{CP}}{}^{2}}

\newcommand{\R}{\mathbb{R}}
\newcommand{\Z}{\mathbb{Z}}
\newcommand{\Q}{\mathbb{Q}}
\newcommand{\C}{\mathbb{C}}

\def \x {\times}
\def \eu{{\text{e}}}
\def \sign{{\text{sign}}}
\newcommand{\ste}{S^2 \times S^2}
\newcommand{\sto}{S^2 \tilde{\times} S^2}

\begin{document}

\title[Round handles, logarithmic transforms, and smooth $4$-manifolds]
{Round handles, logarithmic transforms, \\ and smooth $4$-manifolds}

\author[R. \.{I}nan\c{c} Baykur]{R. \.{I}nan\c{c} Baykur}
\address{Max Planck Institute for Mathematics, Bonn, Germany \newline
\indent Department of Mathematics, Brandeis University, Waltham MA, USA}
\email{baykur@mpim-bonn.mpg.de, baykur@brandeis.edu}

\author[Nathan Sunukjian]{Nathan Sunukjian}
\address{Stony Brook University, Department of Mathematics} 
\email{nsunukjian@math.sunysb.edu}

\begin{abstract} 
Round handles are affiliated with smooth $4$-manifolds in two major ways: $5$-dimensional round handles appear extensively as the building blocks in cobordisms between $4$-manifolds, whereas $4$-dimensional round handles are the building blocks of broken Lefschetz fibrations on them. The purpose of this article is to shed more light on these interactions. We prove that if $X$ and $X'$ are two cobordant closed smooth $4$-manifolds with the same euler characteristics, and if one of them is simply-connected, then there is a cobordism between them which is composed of round $2$-handles only, and therefore one can pass from one to the other via a sequence of generalized logarithmic transforms along tori. As a corollary, we obtain a new proof of a theorem of Iwase's, which is a $4$-dimensional analogue of the Lickorish-Wallace theorem for $3$-manifolds: Every closed simply-connected $4$-manifold can be produced by a surgery along a disjoint union of tori contained in a connected sum of copies of $\CP$, $\CPb$ and $S^1 \x S^3$. These answer some of the open problems posted by Ron Stern in \cite{S}, while suggesting more constraints on the cobordisms in consideration. We also use round handles to show that \emph{every} infinite family of mutually non-diffeomorphic closed smooth oriented simply-connected $4$-manifolds in the same homeomorphism class \emph{constructed up to date} consists of members that become diffeomorphic after one stabilization with $\ste$ if members are all non-spin, and with $\ste \, \# \, \CPb$ if they are spin. In particular, we show that simple cobordisms exist between knot surgered manifolds. We then show that generalized logarithmic transforms can be seen as standard logarithmic transforms along fiber components of broken Lefschetz fibrations, and show how changing the smooth structures on a fixed homeomorphism class of a closed smooth $4$-manifold can be realized as relevant modifications of a broken Lefschetz fibration on it. 
\end{abstract}
\maketitle

\section{Introduction}

During the past three decades, great advances have been made in the study of smooth $4$-manifolds, demonstrating highly peculiar phenomena in vast families of examples, examples which devastated many proposed classification schemes. Most of these examples involve the ``logarithmic transform'' operation, which is the $4$-dimensional analogue of the Dehn surgery operation in dimension $3$. In particular, all known constructions of \textit{infinite} families of smooth structures on a fixed homeomorphism type involve logarithmic transforms along tori. (See \cite{FS6} for an excellent survey on this subject.) Hence, Ron Stern posted the following open problems in \cite{S}: \ (P1) Are any two arbitrary closed smooth oriented simply-connected $4$-manifolds $X$ and $X'$ in the same homeomorphism class related via a sequence of logarithmic transforms along tori? (Problem 12 in \cite{S}.) \linebreak (P2) Is there a cobordism between $X$ and $X'$ which is composed of round $2$-handles only? (Problem 15 in \cite{S}.) 

In Section \ref{Round cobordisms}, we answer these problems affirmatively. We begin in Section \ref{PreliminaryResults} by observing the effect of attaching $5$-dimensional round handles to $4$-manifolds. We then prove that between any two cobordant closed smooth $4$-manifolds, one of which is simply-connected, there exists a cobordism with only round $2$-handles (Theorem \ref{RoundCobordismThm}). Our result builds up on Asimov's construction of round handle decompositions in \cite{A}. It will follow that one can pass from one of these $4$-manifolds to the other via a sequence of generalized logarithmic transforms, and also that any closed simply-connected $4$-manifold $X$ can be produced by a sequence of logarithmic transforms on a connected sum of $\CP$ and $\CPb$, where the numbers of summands are determined by the euler characteristic and signature of $X$ (Corollaries \ref{related} and \ref{relatedbylog}). In turn, we obtain the following corollary: Any simply-connected $4$-manifold $X$ can be obtained by performing \textit{simultaneous} generalized logarithmic transforms along a framed `link' of embedded self-intersection zero tori in connected sums of $\CP$, $\CPb$ and $S^1 \x S^3$ (Corollary \ref{simultaneous}). After posting the first version of our article, we found out that this last result was proved earlier by Iwase in \cite{Iwase}, who did not use the language of logarithmic transforms at the time, from which one can derive Corollary \ref{relatedbylog} as well. Unlike in our proof, Iwase's arguments make use of the famous result of C.T.C. Wall's mentioned below. These results are analogous to the Lickorish-Wallace theorem for $3$-manifolds, which states that any closed orientable $3$-manifold can be obtained by Dehn surgeries on a framed link in the $3$-sphere. Here we shall note that in the cobordisms we obtain, stabilizations/destabilizations with standard pieces $\ste$ and $\sto$ appear very often in the intermediate steps, obstructing one's hope of relating the well-known smooth invariants of the $4$-manifolds on the two ends of the cobordism using standard gluing arguments. We therefore include a discussion about further constraints that might be imposed on these cobordisms to avoid this (see Remark \ref{caution}) and we discuss different versions of our results under such extra assumptions. 

A celebrated theorem of C.T.C. Wall says that any two closed oriented smooth simply-connected $4$-manifolds $X$ and $X'$ homeomorphic to each other become diffeomorphic after stabilizing with some number of copies of $\ste$ \cite{Wa}, raising the question whether $X$ and $X'$ become diffeomorphic after only one stabilization or not. (Problem 14 in \cite{S}.) Let $ \{X_m \, | \, m: 1, 2, \ldots \} $ be a family of mutually non-diffeomorphic closed smooth oriented simply-connected 4-manifolds in the same homeomorphism class. If $X_{m+1}$ is obtained from $X_m$ by a log transform, then we show that they become diffeomorphic after one stabilization with either $\ste$ or $\sto$. Consequently, \emph{every} infinite family of mutually non-diffeomorphic closed smooth oriented simply-connected \emph{non-spin} $4$-manifolds in the same homeomorphism class \emph{constructed up to date} consists of members that become diffeomorphic after one stabilization with $\ste$, and similarly after stabilizing with $\sto$ (since $X \# \, \sto$ is diffeomorphic to $X \# \, \ste$ when $X$ is non-spin). If instead we have a family of spin $4$-manifolds, the same holds true after blowing-up all once. In particular, we obtain a new and simple proof of knot surgered $4$-manifolds becoming diffeomorphic after just one stabilization (Corollary \ref{logrelevance}), a result originally due to Auckly \cite{Au} and independently to Akbulut \cite{Ak}. We also show that the unknotting number of a knot $K$ does not give a lower bound on the number of logarithmmic transforms one needs to pass from a $4$-manifold $X$ to a knot sugered manifold $X_K$.

Lastly, in Section \ref{BLFs} we turn to broken Lefschetz fibrations over the $2$-sphere, whose building blocks are $4$-dimensional round handles. We show that \textit{generalized} logarithmic transforms can be seen as \textit{standard} logarithmic transforms along fiber components of broken Lefschetz fibrations (i.e. the logarithmic transform becomes `standard' in a `generalized' fibration), and present how changing the smooth structure on a fixed homeomorphism class of a closed smooth $4$-manifold can be realized as a modification of a broken Lefschetz fibration on it (Theorem \ref{BLFcobordismThm}).

\vspace{0.2in}
\section{Preliminaries} \label{Preliminaries}

\subsection{Round handles} \ 

An \emph{$m$-dimensional round $k$-handle} is simply $S^1$ times an $(m-1)$-dimensional $k$-handle, i.e. an $S^1 \x D^k \x D^{m-k-1}$ attached along $S^1 \x \partial D^k \x D^{m-k-1}$. An $m$-dimensional round $k$-handle $R_k$ decomposes as the attachment of two $m$-dimensional handles $h_{k-1}$ and $h_k$ of indices $k-1$ and $k$, which intersect algebraically zero times but geometrically twice. This can be easily visualized by looking at the decomposition of an annulus into two handles, and one then thickens the cores and cocores to the desired dimensions. If $R_k$ can be decomposed into $h_{k-1}$ and $h_k$ where one can isotope these handles so that they are attached independently, i.e. the core of $h_k$ does not intersect the cocore of $h_{k-1}$, then we will call $R_k$ a \textit{trivial round handle}. 

We will use the following notational conventions throughout this article: For $Y$ and $Y'$ compact oriented manifolds with boundary, $Y=Y'$ means that there is an orientation preserving diffeomorphism between $Y$ and $Y'$. If $W$ is an oriented cobordism from $X$ to $X'$ with a given handlebody decomposition, 
\[ W = I \times X + \Sigma \, {h_1} + \ldots + \Sigma \, {h_n} \, ,\]
such that $X = \partial_{-} W$ and $X' = \partial_{+} W$, by abuse of notation we will write 
\[X' = X + \Sigma \,  {h_1} + \ldots + \Sigma \, {h_n} \, .\]
Similarly, when $W$ has a round handle decomposition 
\[W = I \x X + \Sigma \, {R_1} + \ldots + \Sigma \, {R_n} \, , \]
we will write 
\[ X' = X + \Sigma \, {R_1} + \ldots + \Sigma \, {R_n} \, .\]  
Here $\Sigma h_i$ and $\Sigma R_i$ denote a collection of index $i$ handles and a collection of index $i$ \textit{round} handles, respectively.

As observed by Asimov \cite{A}, any handle pair $h_{k-1}$ and $h_k$ attached to a manifold $X$ independently can be turned into a trivial round $k$-handle, i.e. 
\[X' = X + h_{k-1} + h_k = X + R_k \]
under these assumptions. This fact is referred as the ``Fundamental Lemma of Round Handles'' in \cite{A}. Lastly, note that if an $m$-manifold $X$ decomposes into round handles, then it necessarily has trivial euler characteristic. As shown by Asimov, this is not only a necessary but also a  sufficient condition provided that $m \neq 3$ and $X$ is not the M\"obius band \cite{A}.

\vspace{0.2in}
\subsection{Logarithmic transforms} \

Let $T$ be an embedded $2$-torus in a $4$-manifold $X$ with trivial normal bundle $\nu T$. A \emph{framing} of $\nu T$ is the choice of a projection  $\pi: \nu T \to D^2$, which equivalently is a choice of an orientation-preserving diffeomorphism $\tau :\nu T \to D^2 \x T^2$, resulting in an identification \[ H_1(\partial (X \setminus \nu T)) \cong H_1(T)\oplus \Z \, , \]
where the last summand is generated by a positively oriented meridian $\mu_T$ of $T$. We can construct a new $4$-manifold $X' = (X \setminus \nu T) \, \cup_{\phi} D^2 \x T^2$ using a diffeomorphism $\phi: \partial (T^2 \x D^2) \to \partial \nu T$. This diffeomorphism is uniquely determined up to isotopy by the homology class 
\[ [(\tau \phi(\partial D^2)] = p [\mu] + q [\alpha] \, , \]
where $\alpha$ is a \emph{push-off} of a primitive curve in $T$ by the chosen framing $\tau$, which we will also denote by $\alpha$. To sum up, the result of the surgery is determined by the torus $T$, the framing $\tau$ (equivalently $\pi$), \emph{surgery curve} $\alpha$ and \emph{the surgery coefficient} $p/q \in \Q \cup \{ \infty \}$. We will encode this data in the notation $X(T,\tau, \alpha, p/q)$, and call this operation producing $X'= X(T, \tau, \alpha, p/q)$ as the \emph{generalized logarithmic $p/q$ transform} of $X$ along $T$ with framing $\pi$. If $q=\pm 1$, we will refer to it as an \textit{integral} logarithmic transform, otherwise we call it a \textit{rational} logarithmic transform. It shall be clear from the very definitions that a logarithmic $\infty$ transform gives $X$ back, and a logarithmic $0$ transform usually changes the topology. 

The above operation generalizes the \emph{standard logarithmic transform} performed on an elliptic surface, or on the total space of a genus one Lefschetz fibration, which amounts to modifying the $4$-manifold along with the fibration on it by replacing a regular torus fiber with an $m$-multiple of this fiber. The new fibration conforms to the local model: 
\[ (D^2 \x T^2) / \Z_m \to D^2 / \Z_m \, , \]
where the generator $\sigma \, $ of $\Z_m$ acts on $D^2 \x T^2$ by 
\begin{equation} \label{standardlog}
\sigma \, (z, x, y) = (exp(2 \pi i/m) \, z, \, x - p/m, \, y - q/m) 
\end{equation}
for $(z,x,y) \in \C \x \R^2/{\Z^2}$ with $|z|=1$, $gcd(m,p,q)=1$, and acts on $D^2$ by 
\[z \mapsto exp(2 \pi i/m)z \, , \]
inducing a fibration coming from the projection of $D^2 \x T^2$ onto its $D^2$ component. That is, the standard logarithmic transform is defined for such an $(X,f)$, with $T=f^{-1}(z)$ for some regular value $z$ of $f$, where $\pi=f$ and the surgery coefficient is always integral (and $p,q$ in the above local model are auxiliary). Below, we reserve the expression \textit{log transform} for generalized logarithmic transform and say \textit{standard} log transform to indicate this special setting.

\vspace{0.2in}
\subsection{Broken Lefschetz fibrations} \

Let $X$ and $\Sigma \, $ be closed oriented manifolds of dimension four and two, respectively, and $f : X \to \Sigma \, $ be a smooth surjective map. The map $f$ is said to have a Lefschetz singularity at a point $x$ contained in a discrete set $C \subset X$, if around $x$ and $f(x)$ one can choose orientation preserving charts so that $f$ conforms the complex local model $(u, v) \to u^2 + v^2$. The map $f$ is said to have a round singularity along an embedded $1$-manifold $Z \subset X \setminus C$ if around every $z \in Z$, there are coordinates $(t, x_1, x_2, x_3)$ with $t$ a local coordinate on $Z$, in terms of which $f$ is given by $(t, x_1, x_2, x_3) \to (t, x_1^2 + x_2^2 - x_3^2)$. A broken Lefschetz fibration is then defined as a smooth surjective map $f : X \to \Sigma \, $ which is a submersion everywhere except for a finite set of points C and a finite collection of circles $Z \subset X \setminus C$, where it has Lefschetz singularities and round singularities, respectively. These fibrations are found in abundance, as any generic map from $X$ to $S^2$ can be homotoped to a broken Lefschetz fibration \cite{Sae, B}.

\vspace{0.2in}
\section{Round cobordisms and logarithmic transforms} \label{Round cobordisms}

\subsection{Preliminary results.}  \label{PreliminaryResults} \
Many of the new smooth $4$-manifolds that have arisen in the past couple of decades are constructed using similar techniques: They use logarithmic transforms and fiber sums to produce new smooth structures on a fixed homeomorphism type of a $4$-manifold. Before we discuss the role of round handles in cobordisms between homeomorphic simply-connected $4$-manifolds, let us first demonstrate how round handles appear in fiber sums.

\begin{proposition} \label{fibersum}
Let $\Sigma \, _i$ be closed orientable surfaces of genus $g$ with trivial normal bundle in $X_i$, $i=1,2$, and $X$ be a fiber sum of $X_1$ and $X_2$ along $\Sigma \, _1$ and $\Sigma \, _2$. Then $X$ is obtained from the disjoint sum of $X_1 \setminus \nu T_1$ and $X_2 \setminus \nu T_2$ by attaching round handles. 
\end{proposition}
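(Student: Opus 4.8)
The plan is to realize the fiber sum by carving out a bicollar of the gluing hypersurface and recognizing that bicollar as a small round handlebody built on its own boundary. Set $M_i = X_i \setminus \nu\Sigma_i$, so $\partial M_i = \Sigma_g \times S^1 =: N$ (here one uses that $\Sigma_i$ has trivial normal bundle), and write $X = M_1 \cup_\phi M_2$ for the diffeomorphism $\phi \colon \partial M_2 \to \partial M_1$ defining the fiber sum. Since $N$ is a two-sided closed hypersurface in the oriented $4$-manifold $X$, with $M_1$ on one side and $M_2$ on the other, it has a tubular neighborhood $A \cong N \times [-1,1]$; shrinking slightly, $X = \widehat M_1 \cup A \cup \widehat M_2$ where $\widehat M_i \cong M_i$ and $A$ is attached to $\widehat M_1 \sqcup \widehat M_2$ along its \emph{entire} boundary $\partial A = (N \times \{-1\}) \sqcup (N \times \{1\}) = \partial(\widehat M_1 \sqcup \widehat M_2)$. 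After absorbing the exterior collar $\partial A \times I$ back into $M_1 \sqcup M_2$, passing from $M_1 \sqcup M_2$ to $X$ is thus exactly the operation of attaching to $M_1 \sqcup M_2$ the handles of some handle decomposition of $A$ relative to $\partial A$. It remains to exhibit such a decomposition consisting of round handles.

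To do this I would factor $A \cong \Sigma_g \times D^1 \times S^1$ and first decompose the $3$-manifold $\Sigma_g \times D^1$ relative to its boundary $\Sigma_g \sqcup \Sigma_g$. Multiplying a standard handle decomposition of $\Sigma_g$ (one $0$-handle, $2g$ $1$-handles, one $2$-handle) by the handle decomposition of $D^1$ relative to $\partial D^1$ (a single $1$-handle joining the two endpoints) yields a handle decomposition of $\Sigma_g \times D^1$, built from a collar of $\Sigma_g \sqcup \Sigma_g$, with one $1$-handle, $2g$ $2$-handles, and one $3$-handle --- the $1$-handle being the one that connects the two boundary components. (Equivalently, one pulls back to $\Sigma_g \times [-1,1]$ a Morse function on $[-1,1]$ whose minimum set is $\{\pm 1\}$ and perturbs using a Morse function on $\Sigma_g$.) Now crossing this entire picture with the remaining $S^1$ factor turns each $k$-handle $D^k \times D^{3-k}$ into $S^1 \times D^k \times D^{3-k}$, a $4$-dimensional round $k$-handle, and turns the collar of $\Sigma_g \sqcup \Sigma_g$ into a collar of $\partial A = N \sqcup N$. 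Hence $A$, relative to $\partial A$, is built by attaching one round $1$-handle, then $2g$ round $2$-handles, then one round $3$-handle.

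Combining the two steps shows that $X$ is obtained from $(X_1 \setminus \nu\Sigma_1) \sqcup (X_2 \setminus \nu\Sigma_2)$ by attaching one round $1$-handle joining the two pieces, then $2g$ round $2$-handles attached along tori $T^2 \subset N$ (these being precisely the round $2$-handles familiar from torus surgeries), then one round $3$-handle capping off the leftover $S^1 \times S^2$ in the boundary; a consistency check is that round handle attachments preserve the euler characteristic and $\chi(X) = \chi(X_1) + \chi(X_2) - 2\chi(\Sigma_g) = \chi(M_1) + \chi(M_2)$. The step I expect to require the most care is the construction of the \emph{relative} handle decomposition of $\Sigma_g \times D^1$ --- one built simultaneously from collars on \emph{both} boundary copies of $\Sigma_g$ --- together with checking that multiplying it by $S^1$ does produce honest round handles attached in the claimed fashion. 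By contrast, the reduction in the first paragraph --- that gluing in the bicollar along its full boundary is the same as attaching the handles of its rel-boundary decomposition --- is formal, though worth spelling out, since it is what makes the fiber sum literally an instance of round handle attachment, with no dependence on the twisting diffeomorphism $\phi$.
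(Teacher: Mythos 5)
Your proof is correct and takes essentially the same route as the paper's: realize the fiber sum as the result of gluing the middle piece $S^1 \times I \times \Sigma_g$ onto $(X_1 \setminus \nu\Sigma_1) \sqcup (X_2 \setminus \nu\Sigma_2)$, and decompose that piece into round handles obtained by crossing a standard handle decomposition of $\Sigma_g$ with the circle factor. The only difference is bookkeeping: because you correctly build the decomposition relative to \emph{both} ends of the $I$-factor, your round-handle indices come out shifted up by one (one $R_1$, then $2g$ copies of $R_2$, then one $R_3$), whereas the paper labels the pieces $R_0, R_1^1, \ldots, R_1^{2g}, R_2$ by the indices of the underlying handles of $\Sigma_g$; your accounting of the attaching regions is the more precise one.
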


\begin{proof} We get $X$ from the disjoint union $(X_1 \setminus \nu T_1) \, \sqcup (X_2 \setminus \nu T_2)$ by attaching an \linebreak $S^1 \x I \x \Sigma \, _g$ to the latter. Here, the basic handle decomposition of 
\[ \Sigma \, _g= h_0 + \Sigma \, _{i=1}^{2g} h_1^i + h_2 \]
yields a decomposition of $S^1 \x I \x \Sigma \, _g$ into round handles $R_0$, $R_1^1,\ldots R_1^{2g}$, $R_2$. 
\end{proof}

On the other hand, log transforms, which will be of our main focus in this article, are related to round handles as follows:

\begin{lemma} \label{round2islog}
A round $2$-handle attachment to a $4$-manifold $X$ is equivalent to performing an integral generalized logarithmic transformation on $X$.
\end{lemma}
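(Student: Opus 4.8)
The plan is to unpack both sides in terms of ordinary handles and show they produce the same manifold. By definition a $4$-dimensional round $2$-handle is $S^1 \times D^2 \times D^1$, attached along $S^1 \times \partial D^2 \times D^1 = S^1 \times S^1 \times D^1$; equivalently, it is a thickened annulus $S^1 \times (\text{annulus})$, and the annulus decomposes as a $1$-handle $h_1$ plus a $2$-handle $h_2$ cancelling against it. Thus $X' = X + R_2$ is obtained by removing a tubular neighborhood $S^1 \times S^1 \times D^1 \times \{0\} \subset \partial X \times \{1\}$ — wait, more carefully: attaching $R_2$ along $\phi: S^1 \times S^1 \times D^1 \hookrightarrow \partial X$ removes (after the cobordism is complete) a neighborhood of the attaching region and glues in the complementary piece of $\partial(S^1 \times D^2 \times D^1) = S^1 \times D^2 \times S^0 \cup S^1 \times S^1 \times D^1$. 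So I would first identify the attaching region of $R_2$ with a neighborhood $\nu(S^1 \times S^1) = S^1 \times S^1 \times D^1$ of an embedded torus $T$ with a chosen trivialization of its (rank-one, hence the $D^1$) normal bundle — here one must note the torus $T$ sits in a $3$-manifold level inside $\partial X$, but thickening through the cobordism produces an embedded self-intersection-zero torus in $X'$.

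The key step is to check that removing $S^1 \times S^1 \times D^1$ from $\partial X$ and gluing back $S^1 \times D^2 \times S^0$ (the ``new'' part of the round handle boundary) is exactly the recipe $X' = (X \setminus \nu T) \cup_\phi (D^2 \times T^2)$ for a generalized logarithmic transform. Concretely, I would set $T = S^1 \times (S^1 \times \{0\}) \times \{0\}$ inside the attaching region, with normal bundle framed by the obvious product structure; the surgery reglues $T^2 \times D^2$ where the $D^2$ records how the gluing map acts on the meridian $\mu_T$ versus a pushed-off curve $\alpha$ in $T$. The round handle is built from $h_1 + h_2$ where $h_2$ is attached along a curve that links the belt sphere of $h_1$ algebraically zero and geometrically twice; translating the attaching circle of the $2$-handle of the annulus into the gluing coefficient of the regluing torus should show the surgery coefficient is $p/q$ with $q = \pm 1$, i.e. integral — the ``$\pm 1$'' reflecting precisely the geometric-intersection-number-two / algebraic-zero nature of the handle pair, equivalently the two choices of identification of the annulus' handle decomposition. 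Conversely, starting from an integral log transform $X(T,\tau,\alpha, p/q)$ with $q = \pm 1$, the regluing diffeomorphism $\phi$ of $T^2 \times D^2$ differs from the trivial one by a twist supported near $\partial D^2 \times \alpha$, and I would show this reglued piece is diffeomorphic (rel the identified part of the boundary) to $S^1 \times D^2 \times S^0$ sitting inside $\partial(S^1 \times D^2 \times D^1)$, so the modification is realized by attaching a single round $2$-handle to $X$.

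I expect the main obstacle to be bookkeeping the framings and the surgery coefficient correctly: one has to match the choice of product framing $\pi$ on $\nu T$ coming from the round handle structure, the push-off curve $\alpha$, and the precise gluing map of the round handle's free boundary, and then verify that what comes out is integral (not merely rational) — and, going the other direction, that an arbitrary integral coefficient $p$ (with $q=\pm 1$) is achievable, which amounts to observing that the ``$h_1$-part'' attaching circle of the round handle can be run around $T$ any number of times, or equivalently that composing with self-diffeomorphisms of $\nu T$ adjusts $p$ freely. Once the framings are pinned down this is a routine Kirby-calculus / cut-and-paste verification; I would carry it out by drawing the annulus-into-round-handle picture, crossing with $S^1$, and then reading off the regluing map on $\partial(T^2 \times D^2)$.
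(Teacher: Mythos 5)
The central problem is the dimension of the round handle you attach. In this lemma the round $2$-handle is $5$-dimensional: one attaches $R_2 = S^1 \times D^2 \times D^2$ to the top of the trivial cobordism $I \times X$ along its attaching region $S^1 \times \partial D^2 \times D^2 \cong T^2 \times D^2$, identified with the entire ($4$-dimensional, codimension-zero) tubular neighborhood $\nu T$ of a torus $T$ embedded in the closed $4$-manifold $X$ itself, with rank-\emph{two} normal bundle; $X'$ is then the new upper boundary of the resulting $5$-dimensional cobordism. Since the free part of $\partial R_2$ is $S^1 \times D^2 \times \partial D^2 \cong S^1 \times D^2 \times S^1$, the effect on the boundary $4$-manifold is precisely to delete $\nu T \cong T^2 \times D^2$ and reglue a $T^2 \times D^2$, i.e.\ the recipe $(X \setminus \nu T) \cup_\phi (D^2 \times T^2)$ of a generalized log transform; this is also how the lemma is used later in the paper (a single round $2$-handle gives a $5$-dimensional cobordism from $X$ to $X'$, decomposing into an ordinary $2$- and $3$-handle). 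Your proposal instead uses the $4$-dimensional round $2$-handle $S^1 \times D^2 \times D^1$ attached along $S^1 \times S^1 \times D^1 \subset \partial X$, placing $T$ in a $3$-manifold level with ``rank-one normal bundle''. That setup does not fit the statement: for closed $X$ there is no $\partial X$ to attach to, and when $X$ has boundary this handle is the building block of a broken Lefschetz fibration, not of a surgery on $X$. In particular your ``key step'' --- that removing $S^1 \times S^1 \times D^1$ from $\partial X$ and gluing back $S^1 \times D^2 \times S^0$ is exactly the log transform recipe --- fails as stated: that move removes a $3$-dimensional $T^2 \times D^1$ from the $3$-manifold $\partial X$ and glues in two solid tori, whereas a log transform removes a $4$-dimensional $T^2 \times D^2$ from $X$ and reglues a $T^2 \times D^2$; the pieces are not even of the same dimension. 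Your overall strategy (identify the attaching region with $\nu T$, match the cut-and-paste descriptions, then chase framings) is the right one and is essentially what the paper does, but it has to be run with the $5$-dimensional handle.

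Two smaller points to fix once the dimensions are corrected. The integrality of the coefficient is not a reflection of the ``geometrically twice, algebraically zero'' intersection pattern of the constituent $2$- and $3$-handles; it comes from the fact that the glued-in piece $S^1 \times D^2 \times S^1$ is attached as an $S^1$-family of ordinary $2$-handles, so the boundary of the new meridian disk $\{pt\} \times \partial D^2 \times \{pt\}$ maps to a class $q[\alpha] + p[\mu]$ with $q = \pm 1$ (it traverses a primitive curve of $T$ exactly once), only the meridian coefficient $p$ being free. For the converse direction --- which is where the paper does its actual work --- one first changes the framing $\tau$ by a self-diffeomorphism of $T^2$ so that the surgery curve becomes a factor circle, and then attaches $S^1 \times D^2 \times D^2$ so that the $S^1$-family of $2$-handles is glued along push-offs of the surgery curve with the prescribed framing $p[\mu]$; your observation that twisting the gluing by a self-diffeomorphism of $\nu T$ adjusts $p$ freely is a legitimate way to see that every integer occurs, but it needs to be stated for the $5$-dimensional attachment to mean anything.
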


\begin{proof}
When one attaches a round $2$-handle to $X$, the effect is to surger out $S^1 \x S^1 \x D^2$ and glue back in $S^1 \x D^2 \x S^1$ to obtain a new $4$-manifold $X'$. Thus, the attachment of a round $2$-handle to $X$ is nothing but a log transform along a torus $T \subset X$ identified with the \textit{attaching torus} of the round $2$-handle. We will show that any integral logarithmic transform can be realized by such a round handle attachment.

Let $T$ be an embedded $2$-torus in a $4$-manifold $X$ with trivial normal bundle $\nu T$ framed by $\pi: \nu T \to D^2$, and let $\alpha \subset T$ be the surgery curve. The new manifold $X'=X(T,\tau, \alpha, p)$ is obtained by attaching one $2$-handle, two $3$-handles and a $4$-handle to $\partial(X \setminus \nu T)$, and therefore determined by the framed attaching circle of the $2$-handle. Let $\varphi \, $ be a self-diffeomorphism of $T^2 =S^1 \x S^1$ such that $\{pt\} \x S^1$ is mapped to $\alpha$ and $S^1 \x \{pt\}$ is mapped to some primitive curve $\beta$ on $T$, and set $\tau'= (\varphi \, ^{-1} \x id) \circ \tau$. Hence, $X'=X(T,\tau, \alpha, p)=X(T,\tau', \alpha', p)$, where $\alpha'$ is the new surgery curve. Using this new framing, we can glue a round $2$-handle $R=S^1 \x D^2 \x D^2$ to $X$ along $\nu T$ such that $S^1 \x \{ pt \} \x \{0 \}$ is mapped to $\beta$ and $\{ pt \} \x S^1 \x \{ 0 \}$ is mapped to $\alpha'$. The attachment of $R$ is given by $S^1$ times a $2$-handle attachment, and for each $x \in S^1$ we attach $\{x\} \x D^2 \x D^2$ so that it maps to \, $[\{x'\}] \x [\alpha'] \x p[\mu]$ \, in the homology, where $x' \in \beta$. In other words, the gluing of $R$ has the effect of a family of integral Dehn surgeries parametrized by $x' \in \beta$. We conclude that $X'=X(T,\tau, \alpha, p) = X + R$.  
\end{proof}

\begin{remark}
A slight generalization of round handles are ``twisted round handles'', where one attaches $S^1 \x D^{m-1}$ along a twisted $D^k$ bundle over $S^1$. These appear naturally in broken Lefschetz fibrations on $4$-manifolds. As shown in \cite{B2}, the twisted round handles and the regular ones we focus on in this article are the only ones to which we can attach a family of $k$-handles parametrized along $S^1$. It is easy to see that $5$-dimensional twisted round $2$-handles are equivalent to \textit{Klein bottle surgeries} (see for example \cite{N}) in the same fashion as above.
\end{remark}

We will make repeated use of the following: Say $W$ is a cobordism from $X$ to $X'$ with a given handlebody decomposition, 
\[W = I \times X + \Sigma \, {h_1} + \ldots + \Sigma \, {h_n}\]
or a round handle decomposition 
\[W = \Sigma \, {R_1} + \ldots + \Sigma \, {R_n} \, , \]
both given by increasing indices. By looking at the `dual' decomposition of the handles of $W$ of index greater than i: 
\[W = (I \times X + \Sigma \, {h_1} + \ldots + \Sigma \, {h_i}) \cup (\Sigma \, {h_{i+1}^*} + \Sigma \, {h_n^*} + I \times X') \, , \]
we see that 
\[Y= \partial_+(I \times X + \Sigma \, {h_1} + \ldots + \Sigma \, {h_i}) \, \]
and 
\[Y'= \partial_+(I\times X' + \Sigma \, {h_{n}^*} + \ldots + \Sigma \, {h_{i+1}^*}) \]
are diffeomorphic. In this case, we will say ``$Y$ and $Y'$ can be seen to be diffeomorphic by looking at the i-th level of $W$''.

The next few lemmas will follow from the correspondence given in Lemma \ref{round2islog}:  

\begin{lemma} \label{stabilizelemma}
If $X$ and $X'$ are simply-connected $4$-manifolds, and $X'$ is the result of performing an integral log transform on $X$, then $X$ and $X'$ become diffeomorphic either after stabilizing each with $\ste$ or with $\sto$. If in addition $X$ and $X'$ are both non-spin, then both $X \# \, S^2 \times S^2 = X' \# \, S^2 \times S^2$ and $X \# \, \sto = X' \# \, \sto$.
\end{lemma}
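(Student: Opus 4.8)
The plan is to translate the statement into $5$-dimensional handle theory via Lemma~\ref{round2islog} and then turn the resulting cobordism into a product cobordism at the cost of a single stabilization. First I would use Lemma~\ref{round2islog} to realize $X'$ as the top of a cobordism $W = I\x X + R_2$ built from a single round $2$-handle, and decompose $R_2$ into a $5$-dimensional $1$-handle $h_1$ and a $5$-dimensional $2$-handle $h_2$. Since $X$ is connected, the middle level is $Y := X + h_1 = X \# (S^1\x S^3)$, and $X' = Y + h_2$ with $h_2$ attached along a framed knot $K \subset Y$. Because $X$ is simply connected, $\pi_1(Y) \cong \Z$ is generated by the $S^1$-factor, and because $X'$ is simply connected the $2$-handle $h_2$ must kill this generator, so $K$ carries a generator of $\pi_1(Y)$. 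The only obstruction to $h_1$ and $h_2$ forming a cancelling pair — equivalently, to $R_2$ being a trivial round handle in the sense of Asimov's Fundamental Lemma, and hence to $W$ itself being a product cobordism — is that $K$ runs over $h_1$ in a geometrically nontrivial way rather than exactly once.

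Next I would stabilize the cobordism. Forming the connected sum of $W$ with the product cobordism $\ste \x I$, along an arc joining $\partial_- W$ to $\partial_+ W$, gives a cobordism from $X \# \ste$ to $X' \# \ste$ that is still built from the single round handle $R_2 = h_1 + h_2$, now with middle level $(X \# \ste) \# (S^1\x S^3)$; likewise with $\sto$ in place of $\ste$. The extra $\ste$-summand (respectively $\sto = \CP\#\CPb$-summand) carries an embedded $2$-sphere of square zero together with a geometrically dual sphere (respectively two such spheres), and tubing $K$ along these spheres — together with handle slides of $h_2$ — provides exactly the room needed to bring the attaching circle to the standard position $S^1\x\{pt\}$ meeting the belt sphere of $h_1$ once. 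At that point $h_1$ and $h_2$ cancel, the cobordism becomes a product, and one concludes $X \# \ste = X' \# \ste$ in one case and $X \# \sto = X' \# \sto$ in the other. Which of the two cases occurs is dictated by the parity of the framing of $K$ relative to this standard position (even parity puts one in the $\ste$ case, odd parity in the $\sto$ case), and this parity is exactly the residual piece of the framing data $\tau$ in the underlying integral log transform $X' = X(T,\tau,\alpha,p)$; it is the sole source of the dichotomy.

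For the final assertion, when $X$ and $X'$ are both non-spin one has $X \# \sto = X \# \ste$ and $X' \# \sto = X' \# \ste$, since connected-summing a non-spin $4$-manifold with $\CP\#\CPb$ has the same effect as connected-summing with $\ste$; combining this with whichever of the two cases of the first assertion holds yields both $X \# \ste = X' \# \ste$ and $X \# \sto = X' \# \sto$ simultaneously.

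The step I expect to be the main obstacle is the standardization in the second paragraph: showing that a single $\ste$- or $\sto$-summand already supplies enough room to isotope $K$ into the cancelling position, and simultaneously keeping track of what that isotopy and the handle slides do to the framing of $K$. Here one uses crucially that $R_2$ is a genuine round handle, so that $K$ meets the belt sphere of $h_1$ in a controlled, bounded number of points, and that $K$ is null-homotopic in the complement of the core circle of $h_1$, so that the finitely many excess intersections can be absorbed into an isotopy after one stabilization; the leftover framing ambiguity is precisely what forces the choice between $\ste$ and $\sto$.
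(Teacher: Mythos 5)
Your proof is built on an incorrect decomposition of the round $2$-handle, and this defect is fatal to the whole argument. A $5$-dimensional round $2$-handle $R_2 = S^1 \x D^2 \x D^2$ is attached along $S^1 \x \partial D^2 \x D^2 = T^2 \x D^2$; splitting the $S^1$ factor into two arcs exhibits it as a $2$-handle followed by a $3$-handle, not as a $1$-handle followed by a $2$-handle. (The sentence in the preliminaries indexing the pieces as $h_{k-1}$ and $h_k$ is an off-by-one slip; every use in the paper, including the h-cobordism proposition and the proof of this lemma, takes $R_2 = h_2 + h_3$.) Moreover no decomposition of this cobordism into a single $1$-handle and a single $2$-handle can exist: by excision, $H_*(W, X) \cong H_*(S^1 \x D^2 \x D^2, \, T^2 \x D^2) \cong H_{*-2}(S^1)$, so $H_3(W,X) \cong \Z$, whereas a cobordism built from one $1$-handle and one $2$-handle has vanishing relative $H_3$. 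Consequently the middle level of $W$ is not $X \# \, S^1 \x S^3$, there is no attaching knot $K$ running over a $1$-handle, and the entire second paragraph (tubing $K$ along spheres in a $\ste$- or $\sto$-summand to geometrically cancel a $1$-/$2$-handle pair) is an argument about a cobordism that is not the one at hand. Note also that your parenthetical claim that triviality of $R_2$ would make $W$ a product is false even on its own terms: attaching a trivial round $2$-handle changes $X$ by connected sum with $\ste \# \, S^1 \x S^3$ or $\sto \# \, S^1 \x S^3$ (Proposition \ref{TrivialLog}), and a cobordism carrying a round handle is never a product since $\chi(X') - \chi(X)$ aside, its relative homology is nonzero. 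Finally, even in a setting where a $1$-/$2$-handle cobordism were available, the assertion that one stabilization always supplies "exactly the room needed" to cancel the pair is precisely the kind of statement that constitutes the hard open problem on one stabilization; you give no actual argument for it.

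The correct route, which is the one the paper takes, is much shorter and needs no cancellation at all: write $W = I \x X + h_2 + h_3$ with $R_2 = h_2 + h_3$ and look at the middle level. Attaching the $5$-dimensional $2$-handle surgers out an $S^1 \x D^3$ and glues in $D^2 \x S^2$; since $X$ is simply-connected the attaching circle is null-homotopic, so the middle level is $X \# \, \ste$ or $X \# \, \sto$ (the two choices corresponding to the two framings). Read upside down, the dual of $h_3$ is a $2$-handle attached to $X'$, and since $X'$ is simply-connected the same middle level is $X' \# \, \ste$ or $X' \# \, \sto$. This immediately gives the dichotomy in the first sentence of the lemma, and the non-spin refinement follows exactly as in your last paragraph, which is the only part of your proposal that matches the paper.
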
 
 
\begin{proof}

By Lemma \ref{round2islog}, manifolds related by an integral log transform are cobordant by a cobordism $W$ consisting of a single round $2$-handle, which in turn decomposes as a pair of $2$- and $3$-handles. The lemma will follow from looking at the middle level of $W$.

By attaching a $5$-dimensional $2$-handle to a $4$-manifold $X$, we surger out an $S^1 \x D^3$ and glue in a $D^2 \x S^2$. When $X$ is simply-connected, this amounts to connect summing $X$ with $\ste$ or $\sto$. By looking at the middle level of $W$, we see that $X$ and $X'$ become diffeomorphic after connect summing each with $\ste$ or $\sto$. Note that if $X$ and $X'$ are both non-spin, then $X \# \ste = X \# \sto$ and $X' \# \ste = X' \# \sto$, so one can either connect sum both with $\ste$ or both with $\sto$ to obtain the diffeomorphism. If $X$ and $X'$ are both spin, then they can become homeomorphic only after connect summing both with $\ste$ or both with $\sto$ but not in a mixed way.
\end{proof}

\begin{theorem} \label{towertheorem}
Let $\bigsqcup{_{i=0}^M} T_i$ be a collection of pairwise disjoint embedded tori with self-intersection zero in a simply-connected $4$-manifold $X$. Let $X_m$ denote the result of successively performing log transforms on the tori $T_1, \ldots, T_m$, with $X' = X_M$, and assume that all $X_m$ simply-connected. If in addition every $X_m$ is non-spin for $m=1, \ldots M$ , then $X \# \, S^2 \times S^2 = X' \# \, S^2 \times S^2$ and $X \# \, \sto = X' \# \, \sto$. If $X_m$ are not all spin, then, $X \# \, \ste \# \CPb = X' \# \, \ste \# \CPb$.
\end{theorem}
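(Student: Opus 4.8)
The plan is to induct on $M$, using Lemma~\ref{stabilizelemma} as the base case and building up a single cobordism out of the individual round $2$-handle cobordisms supplied by Lemma~\ref{round2islog}. The key point is that the tori $T_1, \ldots, T_M$ are pairwise disjoint, so the round $2$-handles realizing the successive log transforms can all be attached to (a collar of) $X$ \emph{simultaneously and independently}; writing $W = I \times X + \Sigma\,R_2$ for this cobordism from $X$ to $X'$, each round $2$-handle $R_2^{(m)}$ decomposes as a $2$-handle $h_2^{(m)}$ together with a $3$-handle $h_3^{(m)}$. Reorganizing $W$ by index, we obtain a handlebody decomposition $W = I \times X + \Sigma\,h_2 + \Sigma\,h_3$ with $M$ two-handles followed by $M$ three-handles, and I would look at the middle level $Y = \partial_+(I \times X + \Sigma\,h_2)$ of this cobordism, exactly as in the proof of Lemma~\ref{stabilizelemma}.

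Next I would analyze $Y$ from both sides. Attaching the $M$ five-dimensional $2$-handles to $X$ surgers out $M$ disjoint copies of $S^1 \times D^3$ and glues back $D^2 \times S^2$'s; since $X$ is simply-connected, each such operation is a connected sum with a copy of $\ste$ or $\sto$, and since these operations are performed along disjoint pieces, $Y = X \,\#\, (\#^a\,\ste) \,\#\, (\#^b\,\sto)$ for some $a + b = M$. Dually, looking at the $3$-handles $h_3^{(m)}$ as upside-down $2$-handles attached to $X'$, the same reasoning gives $Y = X' \,\#\, (\#^{a'}\,\ste)\,\#\,(\#^{b'}\,\sto)$ with $a' + b' = M$. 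The hypotheses now control the spin type at every stage: if every $X_m$ is non-spin then $X$, $X'$, and all intermediate manifolds absorb $\ste$ and $\sto$ identically (using $Z \,\#\, \ste = Z \,\#\, \sto$ for non-spin $Z$), so both descriptions of $Y$ can be normalized to $X \,\#\, (\#^M\,\ste)$ and to $X' \,\#\, (\#^M\,\ste)$ respectively — but in fact a single stabilization suffices because connect-summing one $\ste$ already makes everything non-spin-stably-standard, and one re-runs the argument with the minimal cobordism so that $X \,\#\, \ste = X' \,\#\, \ste$ and likewise $X \,\#\, \sto = X' \,\#\, \sto$. If instead not all $X_m$ are spin, then along the tower there is at least one index where the manifold is non-spin; blowing up \emph{all} of them once (i.e. passing to $X \,\#\, \CPb$, $X' \,\#\, \CPb$, and each $X_m \,\#\, \CPb$) makes \emph{every} member non-spin, so the non-spin case applies to the blown-up tower and yields $X \,\#\, \CPb \,\#\, \ste = X' \,\#\, \CPb \,\#\, \ste$.

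In carrying this out the main technical care is in reducing from ``stabilize with $M$ copies'' to ``stabilize with one copy''. The cheap reorganization of $W$ gives diffeomorphisms only after $M$ stabilizations; to get down to a single $\ste$ (or $\sto$) I would argue as in Lemma~\ref{stabilizelemma} that once one copy of $\ste$ is connect-summed, the ambient manifold is non-spin, so every subsequent round $2$-handle's middle-level contribution can be taken to be $\ste$ (equivalently $\sto$), and moreover a $\ste$ summand absorbs the connect sum of any further $\ste$'s and $\sto$'s that the later handles would contribute — so the whole tower of stabilizations collapses onto the first one. Concretely: perform the log transforms one at a time, applying Lemma~\ref{stabilizelemma} at each step to get $X_{m-1} \,\#\, \ste = X_m \,\#\, \ste$ (in the non-spin case), and compose these equalities; the telescoping gives $X \,\#\, \ste = X' \,\#\, \ste$, and symmetrically for $\sto$. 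The one subtlety to watch is that Lemma~\ref{stabilizelemma} as stated produces \emph{either} $\ste$ \emph{or} $\sto$ depending on the framing, but under the standing non-spin hypothesis these two stabilizations agree, so the ambiguity is harmless; this is exactly where the ``not all spin $\Rightarrow$ blow up once'' clause is needed, since only after the blow-up is the non-spin hypothesis available at every stage.
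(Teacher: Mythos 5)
Your operative argument is exactly the paper's: apply Lemma~\ref{stabilizelemma} one log transform at a time to get $X_{m-1} \# \, \ste = X_m \# \, \ste$ (using non-spinness of each stage to resolve the $\ste$/$\sto$ ambiguity), telescope, and in the remaining case blow everything up once and use $\ste \# \, \CPb = \sto \# \, \CPb$. The preliminary ``attach all round $2$-handles simultaneously and look at the middle level'' detour only yields a diffeomorphism after $M$ stabilizations, and the claim that a single $\ste$ summand ``absorbs'' the later stabilizations is false as stated ($Z \# \, \ste \# \, \ste \neq Z \# \, \ste$ in general); but since you discard that route in favor of the step-by-step telescoping, the proof you actually give is correct and coincides with the paper's.
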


\begin{proof}
Say each $X_m$ is non-spin. Then $X_m \# \, \ste = X_{m+1} \# \, \ste$  by Lemma \ref{stabilizelemma}, for all $m= 1, \ldots M-1$. By induction, $X \# \, S^2 \times S^2 = X' \# \, S^2 \times S^2$. If they are not all spin, then after blowing-up each $X_m$, we get a family of homeomorphic non-spin $4$-manifolds and apply the same argument, noting that $\ste \, \# \CPb = \sto \, \# \CPb$. 
\end{proof}

\begin{remark}
It is possible to generalize this theorem to the case when \textit{rational} logarithmic transforms are involved. To see this, one first observes that a rational log transform can be expressed as a sequence of integral log transforms, obtained using continued fractions analoguous to the $3$-dimensional case, and then replaces the given cobordism by one composed of integral log transforms. However, we will not need this generalization for the results that follow.  
\end{remark}

Lastly, we point out a special situation where one can trade a round $1$-handle with a round $2$-handle. (Also see Proposition \ref{TrivialLog} below.)

\begin{lemma} \label{KillRound1}
Let $W$ be a cobordism between $4$-manifolds $X$ and $X'$ given by a round $1$-handle which is attached along two oriented loops which are homotopic with the same orientation. Then there is a cobordism $W'$ between $X$ and $X'$ which is given by a round $2$-handle.
\end{lemma}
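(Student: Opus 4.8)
The plan is to reduce the round $1$-handle to a pair of a $1$-handle $h_1$ and a $2$-handle $h_2$, use the homotopy hypothesis to slide the $2$-handle so that these two handles become attached \emph{independently}, then perform a handle trade turning $h_1$ into a $2$-handle, and finally invoke Asimov's Fundamental Lemma of Round Handles to reassemble the resulting independent pair $h_1' + h_2$ of a $2$- and a ... wait—let me be careful. A round $1$-handle decomposes as $h_0 + h_1$ in the $5$-dimensional cobordism picture: it is $S^1$ times a $5$-dimensional... no, it is $S^1$ times a $4$-dimensional $1$-handle, which decomposes as a $0$-handle and a $1$-handle; crossing with $S^1$ gives a round $0$-handle and a round $1$-handle, which is not what we want. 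The cleaner route is to work directly with the two attaching loops. A $5$-dimensional round $1$-handle attached to $I\times X$ along two disjoint loops $\gamma_0,\gamma_1\subset\{1\}\times X$ is, concretely, the attachment of a single $1$-handle $h_1$ (a tube $D^1\times D^4$ connecting neighborhoods of $\gamma_0$ and $\gamma_1$) followed by a single $2$-handle $h_2$ whose attaching circle is the loop running out along $\gamma_0$, across the new tube, and back along $\gamma_1^{-1}$ (with some framing determined by the round handle structure).

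First I would make this decomposition $W = I\times X + h_1 + h_2$ completely explicit, identifying the attaching circle $C$ of $h_2$ in $\partial_+(I\times X + h_1)$ as the band sum of $\gamma_0$ and $\overline{\gamma_1}$ along the belt of $h_1$. Second, I would use the hypothesis that $\gamma_0$ and $\gamma_1$ are homotopic \emph{with the same orientation} in $X$: after attaching $h_1$, the circle $C$ becomes null-homotopic in $\partial_+(I\times X + h_1)$ (the homotopy from $\gamma_0$ to $\gamma_1$, pushed through the handle, bounds a disk). Since $\partial_+(I\times X+h_1)$ is a $4$-manifold and $C$ is null-homotopic, $C$ bounds an immersed disk; pushing $C$ across this disk — i.e., isotoping and handle-sliding $h_2$ over $h_1$ and over $\partial_-$ — we can arrange that the attaching circle of $h_2$ lies in $\partial_+(I\times X)$ itself, disjoint from the belt sphere of $h_1$. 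At that point $h_1$ and $h_2$ are attached \emph{independently} in Asimov's sense: the core of $h_2$ does not meet the cocore of $h_1$.

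Third, with $h_1$ and $h_2$ attached independently, the pair $h_1 + h_2$ can be cancelled only if their geometric intersection is one; here it is zero, so instead I would apply Asimov's Fundamental Lemma directly in reverse: an independently-attached index $k{-}1$, index $k$ pair can be replaced by a \emph{trivial round $k$-handle}. With $k=2$ this gives $W' = I\times X + R_2$ with $R_2$ a (trivial) round $2$-handle and $\partial_+ W' = \partial_+ W = X'$, which is exactly the claim. The main obstacle I expect is the second step: turning the hypothesis ``$\gamma_0\simeq\gamma_1$ with matching orientations in $X$'' into the geometric statement that $h_2$ can be isotoped off the belt sphere of $h_1$. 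One must check that the relevant homotopy can be promoted to the required isotopy/handle-slide of the $2$-handle — this is where dimension counting ($\pi_1$-level data realized by disks in a $4$-manifold boundary) and the precise bookkeeping of framings enter, and where the orientation hypothesis is genuinely used (a reversed homotopy would produce a loop representing $2[\gamma]$ rather than a null-homotopic one, obstructing the slide). The framing on the resulting round $2$-handle need not be scrutinized, since any round $2$-handle attachment is admissible and Asimov's lemma produces the trivial one automatically.
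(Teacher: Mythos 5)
Your first step is set up correctly (the round $1$-handle splits as a $1$-handle $h_1$ joining neighborhoods of points of $\gamma_0,\gamma_1$, followed by a $2$-handle $h_2$ whose attaching circle $C$ is the band sum of $\gamma_0$ and $\overline{\gamma}_1$ through the tube), but the second step is where the argument genuinely breaks. The hypothesis is only that $\gamma_0$ and $\gamma_1$ are homotopic \emph{to each other}, not null-homotopic, and in $\pi_1\bigl(\partial_+(I\times X+h_1)\bigr)\cong \pi_1(X)*\Z$ the class of $C$ is, up to conjugation, the commutator $[\gamma_0]\,t\,[\gamma_0]^{-1}t^{-1}$, where $t$ is the new generator contributed by $h_1$: the annulus tracing the homotopy caps off the band sum only when the band runs through $X$, whereas your band runs through the handle. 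This element is nontrivial, and is not even conjugate into the factor $\pi_1(X)$, unless $[\gamma_0]=1$; so $C$ cannot be homotoped (hence not isotoped) off the belt $3$-sphere of $h_1$, and the pair cannot be made independent. A concrete test case: $X=S^1\times S^3$ with two parallel copies of $S^1\times\{pt\}$. The outgoing end of the round $1$-handle cobordism is then $T^2\times S^2$ (fundamental group $\Z^2$), which is not $X$ plus a trivial round handle (that would give a free product containing $\Z * \Z$). So your route could at best establish the lemma when the loops are null-homotopic in $X$ --- enough for the application inside Theorem \ref{RoundCobordismThm}, where $X$ is simply-connected, but not the lemma as stated.

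There is also an indexing slip in your last step: by your own (correct) decomposition, a round $1$-handle is a $1$-handle plus a $2$-handle, and correspondingly a round $2$-handle $S^1\times D^2\times D^2$ is a $2$-handle plus a $3$-handle; Asimov's Fundamental Lemma therefore amalgamates an independently attached $1$-/$2$-handle pair into a trivial round \emph{1}-handle, not a round $2$-handle. So even where step two applies, the conclusion is circular: you recover the round $1$-handle you started with (now trivial) and still owe the conversion into a round $2$-handle. The paper's proof sidesteps both issues: it uses only that homotopic embedded circles in a $4$-manifold are isotopic, to place $\gamma_0,\gamma_1$ as parallel copies $S^1\times\{pt_1,pt_2\}$ inside a tubular neighborhood $S^1\times D^3\subset X$; the round $1$-handle attachment is then a local operation whose result is one of the two $(S^1\times S^2\setminus D^3)$-bundles over $S^1$, and each of these is shown to arise as well from attaching a round $2$-handle to $S^1\times D^3$ --- the untwisted one as $S^1$ times a $0$-framed $2$-handle on the unknot, the twisted one by exhibiting an integral logarithmic transform (a round $2$-handle read upside down, via Lemma \ref{round2islog}) carrying it back to $S^1\times D^3$. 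No independence claim, and no null-homotopy of the individual loops, is ever needed there.
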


\begin{proof}
Since $R_1$ is attached along two oriented loops that are oriented homotopic, we can consider them as being attached along $S^1 \times \left\{pt_1, pt_2\right\}$ contained in some $S^1 \times D^3 \subset X$. (We assume the loops $S^1 \x \{0\} \x D^3$ and $S^1 \x \{1 \} \x D^3$ to be co-oriented, following a choice of orientation on the first $S^1$ component.) If we attach a $4$-dimensional $1$-handle to $D^3$ along two points $pt_1, pt_2 \in D^3$, the result is $(S^1 \x S^2) \backslash D^3$. Since $R_1$ is just a $4$-dimensional $1$-handle times $S^1$, the result of attaching a round $1$-handle to $S^1 \times D^3$ along $\left\{pt_1, pt_2\right\} \times S^1$ is an $S^1 \times S^2 \setminus D^3$ bundle over $S^1$. There are two such bundles: the trivial bundle, and the mapping torus constructed using the self-diffeomorphism $\phi:S^1 \times S^2 \longrightarrow S^1 \times S^2$ which is defined to be the identity on $S^1$ and the antipodal map on $S^2$. We will refer to these manifolds as $S^1 \times (S^1 \times S^3 \setminus D^3)$ and $S^1 \tilde{\times} (S^1 \times S^3 \setminus D^3) = [0,1] \times (S^1 \times S^2 \setminus D^3) \, /  (0,z) \sim (1, \phi(z))$, respectively.

We will now see how both of these manifolds can also result from attaching a round $2$-handle to $S^1$ cross the unknot in $S^1 \times D^3$. Notice that if we attach a $4$-dimensional $0$-framed $2$-handle to $D^3$ along an unknot, the result is $(S^1 \times S^2) \backslash D^3$. Since a $5$-dimensional round $2$-handle is a $4$-dimensional $2$-handle times $S^1$, we can attach a round $2$-handle $R_2$ to $S^1 \times D^3$, such that the result is $S^1 \times (S^1 \times S^2 \backslash D^3)$, the same as the result of attaching a round $1$-handle.

To get $S^1 \tilde{\times} (S^1 \times S^3 \setminus D^3)$ after attaching a round $2$-handle is slightly more involved. It is perhaps easier to see the opposite direction: We will see that we can attach a round $2$-handle to  $S^1 \tilde{\times} (S^1 \times S^3 \setminus D^3)$ in such a way that the result is $S^1 \times D^3$. (Note that a round $2$-handle upside down is also a round $2$-handle.) In fact, by Lemma \ref{round2islog}, it is sufficient to find an integral logarithmic transform that accomplishes this. Let $\gamma : [0,1] \longrightarrow S^2$ be an embedding such that $\gamma (0) = \{ NP \}$ and $\gamma(1) = \{ SP \}$, where $\{\text{NP}\}$ and $\{\text{SP}\}$ stand for the north pole and the south pole of $S^2$, respectively. Then in $S^1 \tilde{\times} (S^1 \times S^3 \setminus D^3)$ we achieve our desired result by performing a logarithmic transform on the torus which is the image in the mapping torus of points of the form $( x,y,\gamma(x)) \in [0,1] \times (S^1 \times S^2 \setminus D^3)$.
\end{proof} 

\vspace{0.1in} 
\subsection{Main results on cobordisms.} \label{MainResults} \

We begin by proving a negative result:

\begin{proposition}
An h-cobordism between two closed smooth $4$-manifolds does not admit a round $2$-handle decomposition.
\end{proposition}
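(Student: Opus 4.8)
The plan is to derive a contradiction from the Euler characteristic count. Recall that an $h$-cobordism $W$ between closed smooth $4$-manifolds $X$ and $X'$ is, by definition, one for which both inclusions $X \hookrightarrow W$ and $X' \hookrightarrow W$ are homotopy equivalences; in particular $X$ and $X'$ are homotopy equivalent and hence have equal Euler characteristics, $\chi(X) = \chi(X')$. Moreover, since $W \simeq X$, one has $\chi(W) = \chi(X)$. The strategy is to compute $\chi(W)$ a second way, from a hypothetical round $2$-handle decomposition, and show the two computations are incompatible.

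First I would recall the basic fact, noted in the Preliminaries, that attaching a single round $k$-handle amounts to attaching one ordinary $(k-1)$-handle and one ordinary $k$-handle. Since $W$ is a $5$-dimensional cobordism, a round $2$-handle contributes one index-$1$ handle and one index-$2$ handle. If the round $2$-handle decomposition of $W$ relative to $X$ consists of $r$ round $2$-handles, then $W = I \times X + \Sigma\, R_2$ has an ordinary handle decomposition with $r$ one-handles and $r$ two-handles and no handles of any other index. Computing the Euler characteristic of $W$ relative to $I \times X \simeq X$ by counting cells with signs $(-1)^{\text{index}}$, the net contribution of each round $2$-handle is $(-1)^1 + (-1)^2 = 0$. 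Hence $\chi(W) = \chi(X) + 0 = \chi(X)$, which is consistent and gives no contradiction from $\chi$ alone — so the Euler characteristic is a red herring and the real obstruction must come from homology or the fundamental group.

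The honest approach is therefore to look at $H_*(W, X)$. A round $2$-handle decomposition gives a relative CW/handle chain complex for $(W, X)$ with chain groups $C_1 \cong \Z^r$, $C_2 \cong \Z^r$, and all other $C_i = 0$, so $H_i(W, X) = 0$ for $i \neq 1, 2$, and $H_1(W,X)$, $H_2(W,X)$ are the kernel and cokernel of a single $r \times r$ boundary matrix $\partial_2$. But $h$-cobordism forces $H_*(W, X) = 0$, so $\partial_2$ must be an isomorphism over $\Z$. The key point — and where I would focus the argument — is the fundamental group: attaching an index-$1$ handle to a connected manifold either creates a new free generator in $\pi_1$ or is cancelled by a later index-$2$ handle, and after attaching the single index-$2$ handle from the same round handle one can kill at most one relation. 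Running through the $r$ round $2$-handles in order, one attaches $r$ one-handles and $r$ two-handles in an interleaved fashion ($h_1^{(1)}, h_2^{(1)}, h_1^{(2)}, h_2^{(2)}, \ldots$ up to isotopy, but the attaching sphere of $h_2^{(j)}$ lies in the boundary after $h_1^{(j)}$ is attached). The cleanest way to get a contradiction: examine the first level. After attaching the first round $2$-handle's $1$-handle we have $\pi_1(\partial_+) = \pi_1(X) * \Z$; the accompanying $2$-handle kills one element of this group. For $W$ to be an $h$-cobordism we need the end result built from all of them to recover $\pi_1(X)$ with no homological residue, and a careful bookkeeping of this interleaved process — using that each round handle is "balanced" — shows the relative homology $H_2(W,X;\Z[\pi_1])$ (or even just the abelian $H_2(W,X)$ together with the Whitehead-group/torsion obstruction) cannot vanish.

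The cleanest rigorous route, and the one I would actually write, is via the middle-level diffeomorphism trick already set up in the excerpt: looking at the first level of $W$ (after the $1$-handle, before the $2$-handle, of the first round handle), the positive boundary is $X \# (S^1 \times S^3)$ or a twisted $S^1 \times S^3$ bundle, which has $b_1 \geq 1$ and in particular is not simply connected; whereas an $h$-cobordism requires every intermediate level, read from the other side, to be built from $X'$ by attaching handles of index $\leq$ the appropriate cutoff and to carry the same homotopy type constraints. The sharpest and shortest contradiction: the round $2$-handle decomposition presents $\pi_1(W) = \pi_1(X) * \Z^r / \langle r \text{ relations}\rangle$, a presentation with equal numbers of generators and relators over $\pi_1(X)$, whereas an $h$-cobordism has $\pi_1(W) = \pi_1(X)$; balancing forces each round handle's $2$-handle to exactly kill the corresponding new $\Z$ generator, which in turn forces $\partial_2$ to be, after a change of basis, the identity — making $W$ a trivial product of $r$ copies of the trivial round $2$-handle cobordism, i.e. $W = I \times X$, contradicting that a round $2$-handle cobordism changes the manifold (and more to the point, $r = 0$ is the only possibility, so $W = I \times X$ is not built from any round $2$-handles at all). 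Thus no $h$-cobordism admits a round $2$-handle decomposition unless it is the trivial product with zero handles, which is excluded. The main obstacle is making the fundamental-group bookkeeping precise for the interleaved handle order; I expect to handle it by the level-by-level argument above rather than by a global chain-complex computation.
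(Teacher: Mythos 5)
Your write-up correctly sets up the relative handle chain complex and the fact that an h-cobordism forces $H_*(W,X)=0$, but the argument then derails, and the final contradiction is never actually established. The fatal gap is that you never use the defining property of a round handle, namely that its two constituent handles intersect \emph{algebraically zero} times but geometrically twice; without that input the statement you end up arguing for is false. A cobordism built from pairs of consecutive-index handles in which each upper handle algebraically cancels its partner can perfectly well be an h-cobordism (a cancelling $1$-/$2$-handle pair gives the product $I\times X$, whose intermediate level is $X \# S^1\times S^3$), so your claims that an h-cobordism constrains the homotopy type of intermediate levels, and that a balanced presentation $\pi_1(X)\ast\Z^{r}$ modulo $r$ relators cannot return $\pi_1(X)$, are both false. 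The closing step --- ``balancing forces $\partial_2$ to be the identity after a change of basis, hence $W=I\times X$, hence $r=0$'' --- is a double non sequitur: an invertible (even identity) algebraic boundary matrix does not make handles cancel geometrically in dimension $5$ (that is precisely the h-cobordism subtlety), and invertibility of $\partial_2$ with $r>0$ is exactly what happens in honest h-cobordisms, so it produces no contradiction at all. A side issue: a $5$-dimensional round $2$-handle is $S^1\times D^2\times D^2$, i.e.\ $S^1$ times a $4$-dimensional $2$-handle, and splitting the $S^1$ factor gives ordinary handles of index $2$ and $3$ (as used in Lemma \ref{stabilizelemma}), not of index $1$ and $2$; this does not affect the Euler characteristic remark, but your $\pi_1$ bookkeeping is pinned to the wrong indices.

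The correct mechanism, and the one the paper uses, is a one-line homological argument built exactly on the property you left unused. Decompose each round $2$-handle $R_2^i$ into $h_2^i+h_3^i$. If $W$ were an h-cobordism, then since it has only $2$- and $3$-handles, the matrix of algebraic intersection numbers between attaching spheres of the $3$-handles and belt spheres of the $2$-handles would have to be invertible over $\Z$ (it is the boundary map of the relative complex computing $H_*(W,X)=0$). But the attaching sphere of $h_3^1$ meets the belt sphere of its partner $h_2^1$ algebraically zero times (geometrically twice) by the very definition of a round handle, and it meets the belt spheres of $h_2^i$ for $i\geq 2$ not at all, since those handles are attached afterwards and independently, so they slide off $h_3^1$. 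Hence the matrix has a zero row and cannot be invertible, contradicting $H_*(W,X)=0$. If you want to repair your proof, replace the fundamental-group bookkeeping by this intersection-matrix observation; the level-by-level $\pi_1$ approach cannot work as stated.
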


\begin{proof}
Suppose $W$ is a cobordism with a round $2$-handle decompositon
\[ W = X + \sum_{i=1}^m R_2^i  \]
We claim that $W$ cannot be an h-cobordism. By decomposing each round $2$-handle into a $2$- and $3$-handle, $W$ is given a regular $2$- and $3$-handle decomposition,
\[ W = X + \sum_{i=1}^m (h_2^i + h_3^i)  = X + h_2^1 + h_3^1 + \sum_{i=2}^m (h_2^i + h_3^i)\]
Recall that for an h-cobordism, the $3$-handles must algebraically cancel with the $2$-handles. In this case however, $h_3^1$ does not cancel with $h_2^1$ since together they form a round $2$-handle, which means that the core of $h_3^1$ intersects the co-core of $h_2^1$ algebraically zero times, but geometrically twice. Also $h_3^1$ does not cancel with any of the other $2$-handles since they are attached independently; the other $2$-handles are attached after $h_3^1$ and can be slid off of it. Since $h_3^1$ does not algebraically cancel with the 2-handles, $W$ cannot be an h-cobordism.
\end{proof}

It follows that the cobordism in question from problem (P2) from the introduction can never be an h-cobordism. However we do obtain:

\begin{theorem} \label{RoundCobordismThm}
Let $X$ and $X'$ be two cobordant closed smooth (oriented) $4$-manifolds with the same euler characteristic. If $X$ is simply-connected, then there exists a compact smooth (oriented) cobordism between them with round $2$-handles only.
\end{theorem}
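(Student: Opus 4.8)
The plan is to start from an arbitrary smooth cobordism $W$ between $X$ and $X'$ and surgically modify its handle decomposition until only round $2$-handles remain. First I would invoke Asimov's theory: since $\chi(X) = \chi(X')$, the cobordism $W$ (rel boundary) has trivial relative euler characteristic in the appropriate sense, so that the total count of handles of even index equals the count of handles of odd index, which is precisely the numerical obstruction to a round handle decomposition. Asimov's existence theorem (\cite{A}) then supplies a round handle decomposition $W = I \x X + \Sigma R_0 + \Sigma R_1 + \Sigma R_2 + \Sigma R_3 + \Sigma R_4$, arranged by increasing index. The goal is to trade away all the round handles of index $\neq 2$.

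The core of the argument is a sequence of cancellation and trading moves, carried out from the two ends inward. A round $0$-handle $R_0$ is $S^1 \times (0\text{-handle})$; since $X$ is connected, each $R_0$ can be cancelled against a round $1$-handle $R_1$ (the pair $R_0 + R_1$ together with a further $R_1$ builds $S^1 \times S^2$-type pieces, and the standard handle-slide/cancellation from Asimov's ``Fundamental Lemma'' applies $S^1$-parametrically). Dually, each round $4$-handle cancels against a round $3$-handle by turning $W$ upside down; here I would use that $X'$ need not be simply-connected but the dual cobordism is read from $X'$'s side — so I must be careful, and instead cancel $R_4$'s against $R_3$'s using only that the level just below the top is connected, which holds since attaching $R_3$'s and $R_4$'s to $X'$ keeps things connected. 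After these moves $W$ has only round handles of indices $1$, $2$, $3$. Now I turn each remaining round $3$-handle upside down: from the $X'$ end a round $3$-handle becomes a round $1$-handle, but reading from the $X$ end I instead push the $R_3$'s down past the $R_2$'s. The key point is that since $\pi_1(X) = 1$, after attaching the $R_2$'s the resulting manifold still has abelian (indeed, controllable) fundamental group, so each round $3$-handle is attached along a belt-region that is homotopically trivial; then Lemma \ref{KillRound1} applied to the dual picture — a round $3$-handle dualizes to a round $1$-handle attached along two homotopic loops — lets me replace it by a round $2$-handle. Finally the round $1$-handles: each is attached along two loops in $X$ (possibly after sliding through earlier round $2$-handles), and since $X$ is simply-connected these loops are homotopic with compatible orientation, so Lemma \ref{KillRound1} directly converts each $R_1$ into an $R_2$.

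After all trades, $W$ is built from $I \x X$ by attaching round $2$-handles only, which is the assertion. The orientation statement is automatic since all the moves (handle slides, cancellations, the trades of Lemma \ref{KillRound1}) are performed within the oriented category and Asimov's construction respects orientations.

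The step I expect to be the main obstacle is the disposal of the round $1$- and round $3$-handles: verifying that the relevant attaching/belt loops really are homotopic \emph{with matching orientation} so that Lemma \ref{KillRound1} applies, rather than merely freely homotopic. For the round $1$-handles this requires knowing that the two feet lie in a common $S^1 \x D^3$ and are co-oriented, which uses simple-connectivity of $X$ together with a careful bookkeeping of how orientations propagate when a round $1$-handle is slid across the round $2$-handles beneath it; for the round $3$-handles one must run this same analysis in the dual cobordism while only having control of $\pi_1$ on the $X$ side. Managing the interplay between the two ends — one simply-connected, one not — and ensuring the index-lowering and index-raising trades do not reintroduce handles one has already eliminated, is the delicate part of the proof. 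I would organize the moves as a finite induction on the number of non-$2$ round handles, always eliminating the lowest-index offending handle first, to guarantee termination.
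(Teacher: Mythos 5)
Your overall strategy --- take a full Asimov round handle decomposition of $W$ with round handles of every index $0,\dots,4$ and then trade away the indices $\neq 2$ from both ends --- has a genuine gap at exactly the step you flag yourself: the disposal of the round $3$-handles. In an index-ordered round decomposition these sit above all the round $2$-handles, so read from the bottom their attaching data live in a level obtained from $X$ by a sequence of torus surgeries, while read from the top (as dual round $1$-handles) they are attached to levels built on $X'$, which is not assumed simply-connected. Your proposed fix, that ``after attaching the $R_2$'s the resulting manifold still has abelian (indeed, controllable) fundamental group,'' is unjustified and false in general: a round $2$-handle attachment is a generalized logarithmic transform, and torus surgery on a simply-connected $4$-manifold can produce manifolds with nontrivial, even non-abelian, fundamental group. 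Hence there is no reason the two feet of a dualized round $3$-handle are homotopic with matching orientation in the level where Lemma \ref{KillRound1} would have to be applied, and the trade of round $3$-handles into round $2$-handles is unsupported. (A milder version of the same issue affects your round $1$-handles, which in your decomposition are attached after round $0$- and earlier round $1$-handles rather than to $X$ itself.)

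The paper's proof avoids round $0$-, $3$- and $4$-handles altogether, and your argument can be repaired along those lines: first simplify an arbitrary cobordism to one carrying only ordinary $2$- and $3$-handles (surgering out the $1$- and $4$-handles); the hypothesis $\eu(X)=\eu(X')$ then forces equal numbers of each. Pair off those $2$-/$3$-handles that already form round $2$-handles, and for the remaining ones use only the relevant piece of Asimov's construction: introduce a cancelling $1$-/$2$-handle pair near a point of each $3$-handle's attaching sphere lying off the belt spheres of the $2$-handles, and apply the Fundamental Lemma of Round Handles to fuse each old $2$-handle with a new $1$-handle into a round $1$-handle, and each old $3$-handle with a new $2$-handle into a round $2$-handle. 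The crucial gain is that all round $1$-handles so produced are attached independently, directly to $I \times X$, so their attaching loops lie in the simply-connected manifold $X$, where they are oriented-homotopic and Lemma \ref{KillRound1} converts each round $1$-handle into a round $2$-handle. No control of $\pi_1$ at the $X'$ end or at the intermediate levels is ever needed, which is precisely what your two-ended trading scheme cannot supply.
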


\begin{proof}
It is a standard argument that one can eliminate all the handles with index unequal to $2$ or $3$ in any given cobordism $W$ between $X$ and $X'$. Namely, we can surger out the $1$- and $4$-handles, replacing them with $3$- and $2$-handles respectivly. Take such a simplified handle decomposition of $W$. The assumption on the euler characteristics implies that the number of $2$- and $3$-handles in this cobordism $W$ are the same. Let $N$ be this number, and note that $X = \partial_- W$, $X' = \partial_+ W$.

For each $2$-handle, check if there is a $3$-handle in $W$ that goes over it algebraically zero times and geometrically twice. Label the handles so that $h_2^i$ and $h_3^i$ for \linebreak $i= M+1, \ldots N$ are such pairs with $\partial_- h_3^i$ disjoint from $\cup_{k=i+1}^{N} \partial_+ h_2^k$. For simplicity, let us morever assume that $\partial_- h_3^i$ is disjoint from $\cup_{k=M+1}^{N} \partial_+ h_2^k$ for $i=1, \ldots M-1$ as well, so that we can write 
\[ W= \Sigma \, _{i=M+1}^N h_2^i + \Sigma \, _{i=M+1}^N h_3^i + \Sigma \, _{i=1}^{M} \, (h_2^i + h_3^i) \, . \]
Letting $\widetilde{R_2^i} = h_2^{M+i} + h_3^{M+i}$ be the round $2$-handle for $i=1, \ldots, N-M$, we set: 
\[ W_1 = I \x X + \Sigma \, _{i=1}^M h_2^i \ , \ W_2 = I \x \partial_+ W_1 + \Sigma \, _{i=1}^M h_3^i \ , \] 
\[ \text{and} \ W_3 = I \x \partial_+ W_2 + \Sigma \, _{i=1}^{N-M} \widetilde{R_2^i} \, . \] 

Next, we closely examine Asimov's proof of his main theorem in \cite{A}, which we will give a sketch of here: We can assume that the attaching spheres of the \linebreak $3$-handles are tranvserse to the belt spheres of the $2$-handles and that the attaching sphere $S_i$ of each $h_3^i$ intersects $\partial_+ X \setminus \cup_i^M \partial_- h_2^i$ at some point $p_i$. Then introduce cancelling handle pairs $H_1^i$ and $H_2^i$ away from $S_i^2$ yet in a small neighborhood of each $p_i$ contained in $\partial_+ X \cup_i^M \partial_- h_2^i$. With all these assumptions in hand, we can now use the Fundamental Lemma of Round Handles to pair up $H_1^i$ with $h_2^i$ to create round $1$-handles $R_1^i$, and $H_2^i$ with $h_3^i$ to create round $2$-handles $R_2^i$, where $i=1, \ldots, M$. We get a new handle decomposition of $W_1 \cup W_2 = W'_1 \cup W'_2$ where 
\[ W'_1 = I \x X + \Sigma \,  R_1^i \  \ \text{and} \ \ W'_2= I \x \partial_+ W'_1 + \Sigma \,  R_2^i \, . \]
An important point here is that all $R_1^i$ for $i=1, \ldots, M$ are attached independently from each other and are therefore can be thought as being attached to $I \x X$ simultaneously. (Same holds true for the attachment of $R_2^i$ to $I \x \partial_+ W'_1$ for $i=1, \ldots, M$.) Since $X$ is simply-connected, any two (oriented) loops are (oriented) homotopic in it. Therefore, we can apply Lemma \ref{KillRound1} to replace each $R_1^i$ with a round $2$-handle $\bar{R_2^i}$ for $i=1, \ldots, M$ to obtain a new cobordism $\bar{W}_1$ from $X$ to $\partial_+ W'_1$ which is composed of round $2$-handles. Hence, we get a new cobordism 
\[ \tilde{W} = \bar{W}_1 + W'_2 + W_3 = I \x X + \Sigma \,  \bar{R_2^i} + \Sigma \,  R_2^i + \Sigma \,  \widetilde{R_2^i} \, , \]
composed solely of round $2$-handles.
\end{proof}


Hence we get: 

\begin{corollary} \label{related}
Let $X$ and $X'$ be two cobordant closed smooth (oriented) $4$-manifolds with the same euler characteristic. If $X$ is simply-connected, then $X'$ can be obtained from $X$ by a sequence of log transforms along tori. Any simply-connected $4$-manifold $X$ can be obtained from $a \CP \# b \CPb$ by a sequence of log transforms along tori, where $e(X)=a+b+2$, $\sign(X)=a-b$.
\end{corollary}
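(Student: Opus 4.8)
The plan is to derive Corollary \ref{related} from Theorem \ref{RoundCobordismThm} together with Lemma \ref{round2islog}. The first sentence is essentially immediate: given cobordant $X$ and $X'$ with equal euler characteristic and $X$ simply-connected, Theorem \ref{RoundCobordismThm} produces a cobordism $W$ from $X$ to $X'$ built from round $2$-handles only, say $W = I \x X + \Sigma R_2^i$ with $N$ round $2$-handles. Setting $X_0 = X$ and $X_j = \partial_+(I \x X + R_2^1 + \dots + R_2^j)$, each passage $X_{j-1} \rightsquigarrow X_j$ is a single round $2$-handle attachment, hence by Lemma \ref{round2islog} an integral generalized logarithmic transform along an embedded self-intersection-zero torus in $X_{j-1}$. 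Concatenating, $X' = X_N$ is obtained from $X$ by a finite sequence of log transforms along tori.

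For the second sentence I would first check the numerology: if $X$ is a closed oriented simply-connected $4$-manifold, set $a = \tfrac{1}{2}(e(X) + \sign(X)) - 1$ and $b = \tfrac{1}{2}(e(X) - \sign(X)) - 1$, which are the unique nonnegative integers (here one should note $a,b \ge 0$ because $|\sign(X)| \le e(X) - 2$ for a simply-connected $4$-manifold, a consequence of $b_2 = b_2^+ + b_2^- = e(X) - 2$ and $\sign = b_2^+ - b_2^-$) satisfying $e(X) = a + b + 2$ and $\sign(X) = a - b$. Then $Z := a\,\CP \# b\,\CPb$ is a simply-connected $4$-manifold with $e(Z) = e(X)$ and $\sign(Z) = \sign(X)$. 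Since any two closed oriented simply-connected $4$-manifolds are cobordant — oriented cobordism in dimension $4$ is detected by the signature (the oriented cobordism group $\Omega_4^{SO} \cong \Z$ via $\sign/$, or more elementarily one can connect-sum with $\CP$'s and $\CPb$'s and use that $\CP$ bounds after such adjustments), and here $\sign(Z) = \sign(X)$ — the manifolds $Z$ and $X$ are oriented cobordant with the same euler characteristic, and $Z$ is simply-connected. Applying the first sentence (with the roles so that the simply-connected hypothesis is on $Z$, which it is) gives that $X$ is obtained from $Z = a\,\CP \# b\,\CPb$ by a sequence of log transforms along tori.

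The step I expect to need the most care is the invocation that $X$ and $Z$ are cobordant: one must cite or recall that closed oriented $4$-manifolds are oriented-null-cobordant iff their signatures vanish (equivalently, $\Omega_4^{SO} \cong \Z$ generated by $\CP$), so that $X$ and any $a\,\CP \# b\,\CPb$ with matching signature cobound; this is classical (Thom, Rokhlin) but should be stated. A secondary point worth a sentence is that Theorem \ref{RoundCobordismThm} requires only one of the two ends to be simply-connected, so it does not matter that we apply it ``from $Z$ to $X$'' rather than the reverse — but to be safe one runs the cobordism so the simply-connected manifold $Z$ is $\partial_-$, exactly matching the hypothesis ``$X$ is simply-connected'' in Theorem \ref{RoundCobordismThm}. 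No genuine obstacle remains beyond bookkeeping once these classical facts are in hand.
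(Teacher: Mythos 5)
Your proposal is correct and follows essentially the same route as the paper: the first claim is immediate from Theorem \ref{RoundCobordismThm} plus Lemma \ref{round2islog}, and the second follows by matching signature (hence cobordism class) and euler characteristic of $X$ with $a\,\CP \# b\,\CPb$ and invoking the first claim. Your extra bookkeeping (explicit formulas for $a,b$, the inequality $|\sign(X)| \le e(X)-2$, and the citation of $\Omega_4^{SO}\cong\Z$) simply makes explicit what the paper leaves as "simply observe."
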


\begin{proof}
The first part is immediate after Lemma \ref{round2islog} and Theorem \ref{RoundCobordismThm}. For the second part, simply observe that the signatures of $X$ and $a \CP \# b \CPb$ are the same, thus they are cobordant, and that they have the same euler characteristics. 
\end{proof}

Hence, we obtain the following:

\begin{corollary} \label{relatedbylog}
If $X$ and $X'$ are two closed oriented simply-connected homeomorphic $4$-manifolds, then $X'$ can be obtained from $X$ by a sequence of log transforms along tori. 
\end{corollary}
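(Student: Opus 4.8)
The plan is to reduce the statement to the first assertion of Corollary~\ref{related}, whose hypotheses read: \emph{$X$ and $X'$ are cobordant closed smooth oriented $4$-manifolds with the same Euler characteristic, and $X$ is simply-connected}. Two of these three conditions are already among our hypotheses (simple connectivity of $X$, and the manifolds being closed, smooth, oriented), so the only thing to verify is that two homeomorphic closed oriented simply-connected $4$-manifolds are oriented cobordant and share the same Euler characteristic.

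For the Euler characteristics: a homeomorphism induces an isomorphism $H_*(X;\Z) \cong H_*(X';\Z)$, hence $b_2(X)=b_2(X')$ and $e(X)=2+b_2(X)=e(X')$. For oriented cobordancy I would invoke that the oriented cobordism group in dimension four is $\Omega_4^{SO}\cong\Z$, generated by $[\CP]$ and detected by the signature; thus $X$ and $X'$ bound a common smooth oriented $5$-manifold as soon as $\sign(X)=\sign(X')$. A homeomorphism carries the intersection form of $X$ isometrically onto that of $X'$, so this equality of signatures holds whenever the homeomorphism is orientation-preserving; as we are free to choose the orientation on $X'$, we may assume this is the case (reorienting $X'$ if necessary, which does not affect the conclusion). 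With $X$ simply-connected by hypothesis, Corollary~\ref{related} then produces $X'$ from $X$ by a sequence of log transforms along tori.

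There is no serious obstacle here: all the substance is contained in Theorem~\ref{RoundCobordismThm} together with Lemma~\ref{round2islog} (packaged as Corollary~\ref{related}), and the present deduction only uses the two classical facts that $\Omega_4^{SO}$ is detected by the signature and that Euler characteristic and signature are homeomorphism invariants. The sole point deserving a word of care is the orientation bookkeeping in the orientation-reversing case, handled above by reorienting $X'$; in particular one does not need Freedman's homeomorphism classification beyond the bare statement that homeomorphic simply-connected $4$-manifolds share these two numerical invariants.
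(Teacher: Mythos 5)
Your argument is correct and is essentially the paper's own (implicit) derivation: the paper deduces Corollary~\ref{relatedbylog} directly from Corollary~\ref{related}, using exactly the facts you cite, namely that a homeomorphism forces equal Euler characteristics and equal signatures, the latter giving cobordancy since $\Omega_4^{SO}$ is detected by the signature. Your remark on reorienting $X'$ in the orientation-reversing case is a reasonable piece of bookkeeping that the paper leaves tacit.
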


\begin{remark} \label{caution}
Since these results provide answers to the problems 15 and 12 asked by Ron Stern in \cite{S}, we would like to pause here to closely examine how useful such cobordisms are $(1)$ to obtain a possible classification scheme for closed oriented simply-connected smooth $4$-manifolds; and $(2)$ to produce new smooth structures. 

\noindent For $(1)$: Assume for the moment that in our cobordisms the round $2$-handles were attached independently, that is, $\partial_- R_k$ could be isotoped away from $\cup_{\, i < k} \, \partial_+ R_k$ for all $k = 1, \ldots N$. Then all the tori we performed log transforms along would be disjointly embedded in $X$. It would then follow that every closed smooth oriented $4$-manifold could be produced from a standard $4$-manifold $X_0$ which is a connect sum of some copies of $\CP$, $\CPb$, $S^2 \x S^2$ and $K3$ by performing a log transform along a link of tori in $X_0$ (assuming the $11/8$ Conjecture). However, our proof of Theorem \ref{RoundCobordismThm} does not guarantee this at all. On the contrary, the round handles in $W_3$ are attached along tori that appear only after the attachment of the round handles in $W_1$. 

\noindent For $(2)$: In the proof of Theorem \ref{RoundCobordismThm}, the piece $\bar{W}_1$ we got was composed of \emph{trivial} round $2$-handles attached to $X$ in a small ball containing $D^3 \x S^1$ following from the proof of Lemma \ref{KillRound1}. Recall that such a round $2$-handle consists of a $1$-handle and a $2$-handle that could be attached independently. Now, attaching a $2$-handle yields connect summing with $\ste$ or $\sto$ and attaching a $1$-handle yields connect summing sith $S^1 \x S^3$. Thus the effect of attaching a \textit{trivial} round $2$-handle is the same as connect summing with $\ste \# S^1 \x S^3$ or with $\sto \# S^1 \x S^3$. Existence of such summands in the intermediate steps will imply the vanishing of Gauge theoretic invariants, making it impossible to trace the effect of the sugeries on these invariants. (To the authors' knowledge, the only Gauge-theoretic invariant that seems to be sensitive to such connect sums is the one defined in \cite{Sa}.)

\noindent To strike the best possible cobordisms meeting the goals of $(1)$ and $(2)$ above, one can insist that these cobordisms have the properties: $(1')$ $X'$ is obtained from $X$ by performing \textit{simultaneous} log transforms along tori embedded in $X$; and $(2')$ None of the log transforms change the homeomorphism type. We shall note however, all recent constructions of exotic smooth structures where reverse engineering is employed involve log transforms which indeed change the homeomorphism type. Further, we note that by Theorem \ref{towertheorem}, these assumptions would imply that between any two homeomorphic simply-connected $4$-manifolds there is a cobordism that can be given by only one pair of $2$- and $3$-handles. Whether or not this is true is still an open question. 
\end{remark}

In fact, our observation above suggest the following, which is comparable to Asimov's ``Fundamental Lemma of Round Handles'':

\begin{proposition} \label{TrivialLog}
Stabilizing or destabilizing a $4$-manifold with $\ste \# S^1 \x S^3$ or with $\sto \# S^1 \x S^3$ is a log transform. 
\end{proposition}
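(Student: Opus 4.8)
The plan is to realize the stabilized $4$-manifold as the outcome of attaching a single \emph{trivial} round $2$-handle to $Y$, and then to quote Lemma~\ref{round2islog} to conclude that this attachment is an integral generalized log transform. Concretely, I would fix an embedded $4$-ball $B^4 \subset Y$ (assuming $Y$ connected, otherwise working with a chosen component) and, inside $B^4$, attach to $I \times Y$ first a $5$-dimensional $1$-handle $h_1$ along $S^0 \times D^4$ and then, in a disjoint sub-ball, a $5$-dimensional $2$-handle $h_2$ along an unknotted nullhomotopic circle $C$. The first thing to check is the effect on the outgoing boundary: attaching $h_1$ is surgery on an $S^0$ contained in a ball, which connect-sums with $S^1 \times S^3$, while attaching $h_2$ is surgery on an unknot contained in a ball; since the normal framings of a circle in a $4$-manifold form a two-element set, the two choices connect-sum with $\ste$ or with $\sto$. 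As all of this is supported inside a ball, it is insensitive to $\pi_1(Y)$, so I would obtain $\partial_+(I \times Y + h_1 + h_2) = Y \# \ste \# S^1 \times S^3$ or $Y \# \sto \# S^1 \times S^3$, according to the framing of $C$.

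Next, since $h_1$ and $h_2$ are attached independently (in disjoint balls), I would invoke Asimov's Fundamental Lemma of Round Handles to merge them into a single trivial round $2$-handle $R_2$ with $Y + R_2 = Y + h_1 + h_2$. At this point Lemma~\ref{round2islog} applies directly: attaching the round $2$-handle $R_2$ to $Y$ is an integral generalized logarithmic transform along the attaching torus of $R_2$. This settles the stabilization direction for both $\ste \# S^1 \times S^3$ and $\sto \# S^1 \times S^3$. For destabilization, I would turn the cobordism $W = I \times Y + R_2$ upside down; since a $5$-dimensional round $2$-handle turned upside down is again a round $2$-handle (the $4$-dimensional $2$-handle factor is self-dual, as already used in the proof of Lemma~\ref{KillRound1}), this exhibits $Y$ as the result of attaching a round $2$-handle to $Y \# \ste \# S^1 \times S^3$ (respectively the $\sto$ version), hence, by Lemma~\ref{round2islog} once more, as a log transform of it.

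I expect the only point requiring genuine care to be the boundary computation for $h_2$: one must confirm that surgery on the unknot in a ball, for each of the two available framings, produces exactly the connected sum with $\ste$ and with $\sto$ — in particular the twisted $S^2$-bundle and not some other spin/non-spin pair — and one should double-check that a nontrivial $\pi_1(Y)$ causes no trouble precisely because $h_1$, $h_2$, and $C$ all live in a fixed $4$-ball. Everything else is a formal consequence of Lemma~\ref{round2islog} together with the Fundamental Lemma of Round Handles, so I do not anticipate further difficulties.
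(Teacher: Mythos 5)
Your argument is correct, but it takes a genuinely different route from the paper's. You work one dimension up: you build the cobordism out of an independently attached $5$-dimensional $1$-handle and $2$-handle supported in a ball, compute the effect on the outgoing boundary ($\# \, S^1 \x S^3$ from the $1$-handle, and $\# \, \ste$ or $\# \, \sto$ from the $2$-handle according to the $\Z/2$ worth of framings of the nullhomotopic circle), merge the pair into a single trivial round $2$-handle by Asimov's Fundamental Lemma, and then invoke Lemma \ref{round2islog} (together with the fact, already used in Lemma \ref{KillRound1}, that a round $2$-handle turned upside down is again a round $2$-handle) to conclude that both the stabilization and the destabilization are single integral log transforms. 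This is essentially a formalization of the observation made in Remark \ref{caution}, and it does prove the proposition. The paper instead argues by explicit Kirby calculus inside a $4$-ball (Figure \ref{Simplelog}): it exhibits the stabilization as a concrete log $0$ transform along a torus contained in a ball, with the parity of the framing $m$ determining whether one obtains $\ste \# \, S^1 \x S^3$ or $\sto \# \, S^1 \x S^3$, and exhibits the destabilization as an explicit log $1$ transform back to $D^4$. What the paper's approach buys is this added explicitness: the surgery torus, surgery curve and coefficients are identified, and the same diagrams are reinterpreted afterwards as standard log transforms along torus fibers of the Auroux--Donaldson--Katzarkov broken Lefschetz fibrations, which feeds into Theorem \ref{BLFcobordismThm}. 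What your approach buys is brevity and independence from the handlebody pictures, at the cost of not pinning down which log transform realizes the operation or which framing yields $\ste$ versus $\sto$ --- information the proposition as stated does not require, so no gap results.
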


\begin{proof}
Since stabilizations (resp. destabilizations) correspond to connect summing (resp. removing the connected sum summand) $\ste \# S^1 \x S^3$ or $\sto \# S^1 \x S^3$, we can look at this operation locally.  The second diagram in the first row of the handlebody diagrams in Figure \ref{Simplelog} shows the effect of a log $0$ transform in a \linebreak $4$-ball with surgery curve $\alpha$ parallel to the $m$-framed $2$-handle. After sliding the `large' $0$-framed $2$-handle twice over the $0$-framed $2$-handle attached to the $1$-handle on the top, it slides off from the rest of the diagram. We then cancel the same \linebreak $1$-handle against the $0$-framed $2$-handle, and obtain the third diagram, which represents $\ste \# S^1 \x S^3 \setminus D^4$ if $m$ is even, and $\sto \# S^1 \x S^3 \setminus D^4$ if $m$ is odd. 

\noindent The second row of the Figure \ref{Simplelog} demonstrates the inverse of the above operation. This time we perform a log $1$-transform with surgery curve $\alpha$ linking once with the $1$-handle on the bottom in $(\ste \# S^1 \x S^3) \, \setminus D^4$ if $m$ is even, and in \linebreak $(\sto \# S^1 \x S^3) \,  \setminus D^4$ if $m$ is odd. The second figure is the result of this transform, and the third diagram is obtained after similar handle slides as above. Now it is easy to see that all the handles cancel, yielding $D^4$. Note that other log transforms (including the obvious log $0$ transform) can be performed to realize this inverse operation as well.

\begin{figure}[ht]
\begin{center}
\includegraphics{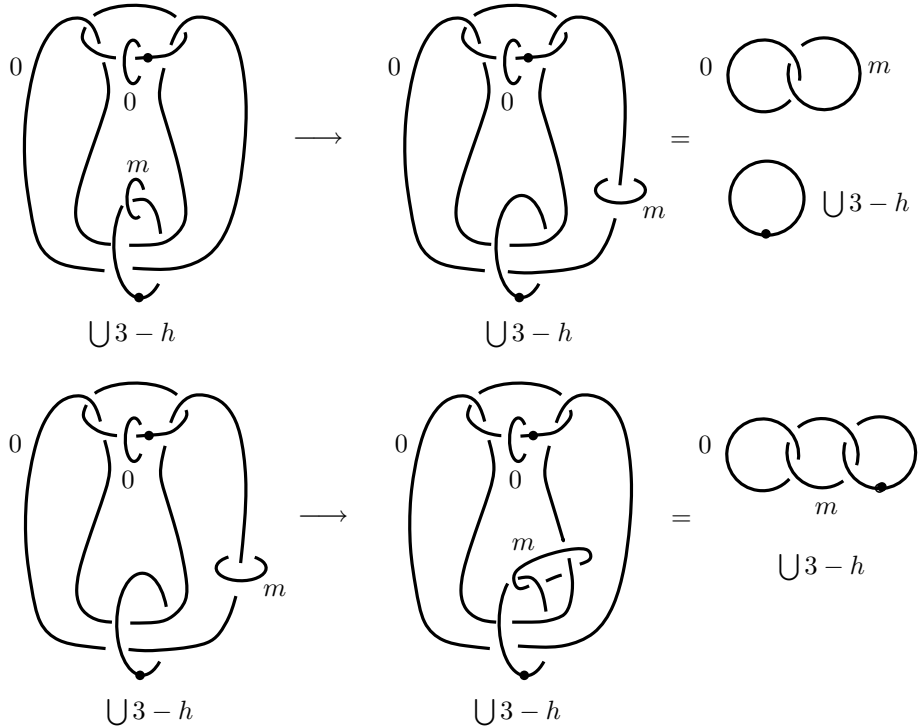}
\caption{\small First row: Log $0$ transform in the $4$-ball resulting in manifolds $(\ste \# S^1 \x S^3) \, \setminus D^4$ for $m: even$ or $(\sto \# S^1 \x S^3) \, \setminus D^4$ for $m: odd$. Second row: A log $1$-transform in $D^4$ as an inverse operation. }
\label{Simplelog}
\end{center}
\end{figure}
\end{proof}

It is worth mentioning that the second diagram in the first row in Figure \ref{Simplelog} union a $4$-handle describes a broken Lefschetz fibration on $\ste \, \# \, S^1 \x S^3$ or on $\sto \, \# \, S^1 \x S^3$ depending on the parity of $m$, where there is only one round singular circle, the higher genus is one, and there exists a section of self-intersection $m$. The second diagram on the second row union a $4$-handle on the other hand describes a similar broken Lefschetz fibration on the $4$-sphere no matter what the parity of $m$ is. (These examples of broken Lefschetz fibrations are due to Auroux, Donaldson and Katzarkov \cite{ADK}, where the description of them via handlebodies we use here were given in \cite{B2}.) In short, these operations are equivalent to performing standard log transforms along regular torus fibers of such broken Lefschetz fibrations.

What follows from Proposition \ref{TrivialLog} is an alternative proof of \, Corollary \ref{relatedbylog} by invoking C.T.C. Wall's celebrated theorem: $X \# \, k \, \ste = X'\# \, k \, \ste$ for large enough $k$, and therefore $X \# \, k \, (\ste \, \# \, S^1 \x S^3)=X' \# \, k \, ( \ste \, \# \, S^1 \x S^3)$. Proposition \ref{TrivialLog} shows that $X$ (resp. X') and $X \# \, k \, (\ste \, \# \, S^1 \x S^3)$ (resp. \linebreak $X'  \# \, k \, ( \ste \, \# S^1 \x S^3)$ are related by $k$ log transforms. Hence one can pass from $X$ to $X'$ by a sequence of $2k$ log transforms. 

Nevertheless, given the lack of any classification scheme for closed oriented simply-connected smooth $4$-manifolds, it is worth noting what we get out of this simple observation, which is a new proof of a theorem of Iwase's \cite{Iwase} (which used a somewhat different language):   

\begin{corollary}[Iwase] \label{simultaneous}
Every closed oriented simply-connected $4$-manifold $X$ can be produced by surgery along a link of tori of self-intersection zero contained in a connected sum of smooth copies of $\CP$, $\CPb$, and $S^1 \x S^3$. Specifically, if $e(X)=a+b+2$, $\sign(X)=a-b$, then there is a link of self-intersection zero tori $L= \sqcup \, T_i$ in $(a+k) \, \CP \# (b+k) \, \CPb \# k \, S^1 \x S^3$ for a large enough integer $k$, such that surgery along this link gives $X$. 
\end{corollary}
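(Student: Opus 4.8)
The plan is to run the Wall-theoretic argument sketched just above (the one giving an alternative proof of Corollary \ref{relatedbylog}), keeping careful track of \emph{where} the log transforms take place: all of them will be supported in disjoint connected-sum summands, so they can be carried out simultaneously along a disjoint link of tori. First I would record the classical diffeomorphisms $\sto \cong \CP \# \CPb$ (the nontrivial $S^2$-bundle over $S^2$) and $\ste \# \CP \cong \CP \# \CP \# \CPb$; in particular $\sto \# S^1 \times S^3 = \CP \# \CPb \# S^1 \times S^3$. By Proposition \ref{TrivialLog} (the $m$ odd case of Figure \ref{Simplelog}), a single integral log transform supported inside such a summand destabilizes it to $D^4$. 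Performing this in $k$ disjoint summands of $Z_k := X \,\#\, k\,(\CP \# \CPb \# S^1 \times S^3)$ exhibits $X$ as the result of $k$ \emph{simultaneous} integral log transforms on $Z_k$, along a disjoint link $L = \sqcup_{i=1}^{k} T_i$ of embedded tori --- one in each summand, hence pairwise disjoint, and each of self-intersection zero since log-transform tori have trivial normal bundle.

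It remains to identify $Z_k$ with the standard manifold in the statement for $k$ large. Write $Z_k = \bigl( X \,\#\, k\CP \,\#\, k\CPb \bigr) \,\#\, k\,S^1 \times S^3$. The manifold $X \# \CP \# \CPb$ is closed and simply-connected, and has odd indefinite intersection form of signature $a-b$ and second Betti number $a+b+2$; since every such form is diagonal, this form is isomorphic to the intersection form of $(a+1)\CP \# (b+1)\CPb$. By C.T.C. Wall's theorem these two manifolds become diffeomorphic after connect-summing with $N$ copies of $\ste$ for some large $N$. Taking $k = N+1$ and absorbing the $\ste$ summands on both sides via $\ste \# \CP \cong \CP \# \CP \# \CPb$, we get $X \# k\CP \# k\CPb \cong (a+k)\CP \# (b+k)\CPb$, hence $Z_k \cong (a+k)\CP \# (b+k)\CPb \,\#\, k\,S^1 \times S^3$. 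Transporting $L$ through this diffeomorphism, simultaneous surgery along the disjoint self-intersection-zero tori $T_i$ recovers $X$, as desired.

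The only delicate part is the form-theoretic bookkeeping in the second step: one must check that merely isomorphism of intersection forms (not homeomorphism, and no gauge-theoretic input or the $11/8$ Conjecture) suffices to invoke Wall's theorem, and that the absorption of the auxiliary $\ste$ summands into the $\CP$/$\CPb$ count can be performed without disturbing the already-installed link $L$ --- which is automatic, since $L$ lies entirely in the fixed $S^1 \times S^3$ summands whereas the absorptions happen in the disjoint $\CP \# \CPb$ region. Everything else is formal, given Proposition \ref{TrivialLog} and the fact that a generalized logarithmic transform is by definition a surgery along a torus.
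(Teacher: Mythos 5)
Your proof is correct, but it follows a genuinely different route from the paper's. The paper proves Corollary \ref{simultaneous} by running the construction of Theorem \ref{RoundCobordismThm} (Asimov's trick plus Lemma \ref{KillRound1}) on a cobordism from $X_0 = a\,\CP \# b\,\CPb$ to $X$, obtaining a cobordism $\bar{W}_1 + W'_2$ built from round $2$-handles only; the trivial round $2$-handles of $\bar{W}_1$ exhibit the middle level $\tilde{X}$ as $X_0$ summed with copies of $\sto \# S^1 \x S^3$, and the independently attached round $2$-handles of $W'_2$ are then read off as simultaneous log transforms along a disjoint link of tori in $\tilde{X}$. Notably, that argument avoids Wall's stabilization theorem, which the authors single out as what distinguishes their proof from Iwase's original one. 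You instead argue in the Wall-based style that the paper only sketches as an ``alternative proof of Corollary \ref{relatedbylog}'': stabilize $X$ with $k$ disjoint copies of $\sto \# S^1 \x S^3$, use Proposition \ref{TrivialLog} to undo each summand by an integral log transform supported in it (giving the simultaneous disjoint link), and identify the stabilized manifold with $(a+k)\,\CP \# (b+k)\,\CPb \# k\, S^1 \x S^3$ via Wall's theorem together with the absorption $\ste \# \CP = \CP \# \CP \# \CPb$. Your flagged ``delicate point'' is handled correctly: Wall's theorem needs only isomorphism of intersection forms (isomorphic forms give an h-cobordism, hence stable diffeomorphism), so no Freedman or gauge-theoretic input is required, and the form of $X \# \CP \# \CPb$ is odd, indefinite, hence diagonal. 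What each approach buys: yours is shorter, makes the link completely explicit (one torus per standard summand, all transforms integral), and is close in spirit to Iwase's original argument; the paper's is independent of Wall's theorem and comes for free from the round-cobordism machinery developed for Theorem \ref{RoundCobordismThm}. One harmless imprecision: the tori of Proposition \ref{TrivialLog} live in the punctured $\sto \# S^1 \x S^3$ summands, not literally inside the $S^1 \x S^3$ pieces; but since you transport the link through the identifying diffeomorphism in the end, nothing in your argument depends on that localization.
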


\begin{proof}
Let $X_0$ be a connected sum of copies of $\CP$, $\CPb$ with $\eu(X_0)= \eu(X)$ and $\sign(X_0)=\sign(X)$. The latter equality provides us with a cobordism from $X_0$ to $X$. Using the former equality and following the proof of Theorem \ref{RoundCobordismThm} (while skipping the construction of $W_3$), we can produce a cobordism $\tilde{W} = \bar{W}_1 + W'_2$ between $X_0$ and $X$ with round $2$-handles only. Recall that the round $2$-handles $\bar{R}_2^i$ of $\bar{W}_1$ for $i=1, \ldots, k$ are attached independently of each other and the attachment of each one amounts to connect summing the simply-connected $4$-manifold $X_0$ with $\ste \, \# \, S^1 \x S^3$. Then the middle level $\tilde{X}$ of this cobordism at the interface of $\bar{W}_1$ and $W'_2$ is a connected sum of copies of $\CP$, $\CPb$ and $S^1 \x S^3$. Also recall that now the round $2$-handles $R_2^i$ for $i=1, \ldots, k$ are attached to $\tilde{X} = \partial_+(\bar{W}_1)$ independently as well, and thus, we can realize all these round $2$-handle attachments as a simultaneous log transforms along embedded tori in $\tilde{X}$. It follows that $X$ can be obtained from a connected sum of $a \, \CP \# b \, \CPb$ with $k$ copies of $\sto \# S^1 \x S^3$, where $e(X)=e(X_0)=a+b+2$, $\sign(X)=\sign(X_0)=a-b$.
\end{proof}

\noindent If the number $k$ could be determined by the intersection form of $X$ alone, this would provide us with a standard manifold $X_Q$ from which one could obtain $X$ with $Q_X = Q$ by a surgery along a framed link of tori in $X_Q$. So it is natural to ask:

\begin{question}
Let $Q$ be a fixed intersection form of a closed oriented simply-connected smooth $4$-manifold. Let $min(X,Y)$ be the minimum number of $2$-handles needed in an h-cobordism between two closed oriented simply-connected smooth \linebreak $4$-manifolds $X$ and $Y$. Is there an upper bound on $\{min(X,Y) | Q_X=Q_Y=Q \}$? 
\end{question}   

We continue with some other applications of our results above. 

\emph{Every construction} of an infinite family of mutually non-diffeomorphic closed smooth oriented simply-connected $4$-manifolds in the same homeomorphism class \emph{given to date} involve log transforms. Chronologically, the first constructions of such families were obtained by standard (thus integral) logarithmic transforms along homologically essential tori in elliptic surfaces, which were then followed by applications of the knot surgery operation of Fintushel-Stern, and finally by log transforms along null-homologous tori in an exotic copy of a standard $4$-manifold. (See for instance \cite{FS6}.) In all of these cases one obtains an infinite family of non-diffeomorphic \linebreak $4$-manifolds for which the following open problem can be tested: Do all homeomorphic simply-connected $4$-manifolds become diffeomorphic after connect summing each with $S^2 \x S^2$ (or $\sto$) once? 

If \, $\{X_m \, | \, m: 1, 2, \ldots \}$ is a family of mutually non-diffeomorphic closed smooth oriented simply-connected $4$-manifolds in the same homeomorphism class constructed using the first or the third approach discussed in the previous paragraph, we see that each $X_{m+1}$ is obtained from $X_m$ by a log $\pm 1$ transform. In this case one can use the Morgan-Mrowka-Szabo gluing formula to compare their Seiberg-Witten invariants. On the other hand, the \textit{knot surgery operation} was defined as follows \cite{FS1}: Let $T$ be a torus with a trivial tubular neighborhood $N(T)$ in a simply-connected $4$-manifold $X$ with simply-connected complement and $K$ be a knot in $S^3$. Then define $X_K= X \setminus N(T) \cup_{\phi} S^1 \times (S^3 \setminus N(K))$, where $N(K)$ is a tubular neighborhood of $K$ in $S^3$ and the boundary diffeomorphism $\phi$ is chosen so that the resulting manifold $X_K$ is also simply-connected. The authors, by giving an elegant formula for the change in Seiberg-Witten invariants in terms of the Alexander polynomial of the knot $K$, showed that infinitely many exotic smooth structures can be produced on any $4$-manifolds $X$ with non-trivial Seiberg-Witten invariants and with such embedded tori $T$. The important observation built into the proof is that from any knot surgered $4$-manifold $X_K$ one can obtain $X$ back by log transforms along null-homologous tori: Say that $K \subset S^3$ can be unknotted by changing $n$ crossings. Then there is a sequence of knots $K = K_0, K_1, \ldots ,K_n$ with $K_n$ the unknot, and a collection of loops  $\bigsqcup_{i=1}^n \alpha_i \subset S^3 \backslash N(K)$ such that by blowing up along these loops, we progressively unknot $K$: i.e. if we blow up along $\bigsqcup_{i=1}^m\alpha_i$ then we get $S^3 \backslash K_m$. We can consider the $S^1 \times \alpha_i$ as tori in $S^1 \times S^3 \backslash N(K)$ and hence as tori in  $X \backslash N(T) \cup (S^1 \times S^3\backslash N(K))$. Performing $(\pm 1)$-log transforms on the first $i$ tori in $X_K$, we get the manifold $X_{K_i}$, thus we get $X$ after performing all the log transforms. Each log transform gives us knot surgery on $X$, with one more crossing undone, that is. Furthermore, the assumptions made on the complement of $T$ in $X$ guarantee that in each step we get a simply-connected $4$-manifold. Hence, by Theorem \ref{towertheorem} we obtain:

\begin{corollary} \label{logrelevance}
\emph{Every} infinite family of mutually non-diffeomorphic closed smooth oriented simply-connected non-spin $4$-manifolds in the same homeomorphism class \emph{constructed using a sequence of logarithmic transforms as above} consists of members that become diffeomorphic after one stabilization with $\ste$ or with $\sto$. The same holds in the spin case, if one stabilizes with $\ste \# \CPb$.
\end{corollary}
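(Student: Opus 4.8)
The plan is to reduce Corollary~\ref{logrelevance} directly to Theorem~\ref{towertheorem}, using the structure of the knot surgery operation recalled in the paragraph just above the statement. First I would recall the key observation already built into that paragraph: for a knot $K \subset S^3$ that can be unknotted by changing $n$ crossings, there is a sequence of knots $K = K_0, K_1, \ldots, K_n$ with $K_n$ the unknot and a collection of loops $\bigsqcup_{i=1}^n \alpha_i \subset S^3 \setminus N(K)$ such that the tori $T_i := S^1 \times \alpha_i \subset S^1 \times (S^3 \setminus N(K)) \subset X_K$ have the property that performing $(\pm 1)$-log transforms (blow-ups along the $\alpha_i$) on the first $i$ of them turns $X_K$ into $X_{K_i}$. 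These tori are pairwise disjoint (the $\alpha_i$ are disjoint in $S^3 \setminus N(K)$, and all lie in the $S^1$-factor region) and have self-intersection zero, so this is precisely the setup of Theorem~\ref{towertheorem} with $X_M = X_K$ and the intermediate $X_m = X_{K_m}$ (up to reindexing, running the log transforms in the order needed to match that theorem's convention).

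The second step is to check the hypotheses of Theorem~\ref{towertheorem}. Simple-connectivity of every intermediate manifold $X_{K_m}$ is guaranteed by the assumptions on the complement of $T$ in $X$ (the complement is simply-connected and $T$ is nullhomologous after the surgeries), exactly as noted in the preceding discussion; this is what the boundary gluing $\phi$ in the definition of knot surgery is arranged to achieve. Now, given an arbitrary infinite family $\{X_m \mid m = 1,2,\ldots\}$ of mutually non-diffeomorphic homeomorphic simply-connected $4$-manifolds constructed as a sequence of log transforms of the stated type, consecutive members $X_m$ and $X_{m+1}$ differ by a finite sequence of $(\pm1)$-log transforms along pairwise disjoint self-intersection-zero tori through simply-connected intermediate manifolds. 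If all the manifolds in sight are non-spin, Theorem~\ref{towertheorem} gives $X_m \# \ste = X_{m+1} \# \ste$ and $X_m \# \sto = X_{m+1} \# \sto$; iterating over $m$ shows all members become diffeomorphic after a single stabilization with $\ste$ (or with $\sto$). If the family is spin, one first blows up every $X_m$ once to obtain a homeomorphic family of non-spin $4$-manifolds, applies the non-spin case, and uses $\ste \# \CPb = \sto \# \CPb$ to absorb the extra $\CPb$ summand, yielding the claim that the members become diffeomorphic after one stabilization with $\ste \# \CPb$.

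The one genuine subtlety — the ``main obstacle'' — is matching the \emph{order} in which the log transforms are performed against the spin/non-spin bookkeeping in Lemma~\ref{stabilizelemma} and Theorem~\ref{towertheorem}. Lemma~\ref{stabilizelemma} only gives a uniform choice of stabilizing summand ($\ste$ \emph{versus} $\sto$) when the two endpoints are both non-spin; when spin manifolds appear in the tower, the mixed-stabilization issue flagged in Remark~\ref{caution} can arise, which is exactly why the spin case is handled by blowing up first rather than directly. So the argument must ensure that the chain of log transforms connecting $X_m$ to $X_{m+1}$, and within it the chain connecting $X_{K_m}$ to $X_{K_{m+1}}$, can be realized so that Theorem~\ref{towertheorem} applies — i.e. that the ``all non-spin'' (or, after blow-up, genuinely non-spin) hypothesis holds along the entire tower and not merely at the named endpoints. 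Once that is verified, the corollary follows with no further computation.
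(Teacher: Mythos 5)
Your proposal is correct and follows essentially the same route as the paper: the corollary is obtained by feeding the unknotting-sequence description of the log transforms (disjoint self-intersection-zero tori $S^1 \times \alpha_i$, with simply-connected intermediate manifolds) directly into Theorem~\ref{towertheorem}, with the spin case handled by a single blow-up and the identification $\ste \# \CPb = \sto \# \CPb$. Your extra remark about the spin/non-spin bookkeeping along the tower is exactly the point the theorem's hypotheses are designed to absorb, so no further argument is needed.
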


\noindent As we summarized above, \emph{all} constructions of such infinite families of mutually non-diffeomorphic closed smooth oriented simply-connected manifolds we know of satisfy the statement of this corollary.
 
\begin{remark} Using heroic Kirby calculus, Auckly in \cite{Au} (for $\sto$) and Akbulut in \cite{Ak} (for $\ste$) proved the above theorem for knot surgered $4$-manifolds. Here we have obtained a new proof of their result (which is weaker in the spin case). The assumptions Auckly and Akbulut make in their papers on the boundary diffeomorphisms $\phi$ and the existence of bounding disks in the complement of the torus $T$ with certain framings not only guarantee that the result of the sugery is again simply-connected, but also allow the authors to restrict their attention to connect sums with only $\ste$ or with only $\sto$, respectively. Our imprecise choice of $\ste$ or $\sto$ in the above statement is due to us not making any extra assumptions on such disks (or more precisely, on their relative framings), which in particular is the reason why we get a weaker result in the case of spin $4$-manifolds. 
\end{remark}

Following Fintushel-Stern's conjecture that $X_K = X_{K'}$ if and only if $K=\pm K'$ (the negative sign means the `mirror' of $K$), one might ask whether the unknotting number of $K$ gives a lower bound on the number of log transforms required to pass from $X_K$ to $X_K'$. We see that it does not:

\begin{corollary} 
Let $X$ be a closed oriented simply-connected $4$-manifold, $T$ be an embedded torus in $X$ with trivial normal bundle, and $X_K$ be the manifold obtained by knot surgery in $X$ along $T$. Then, for any two knots $K$ and $K'$, $X_K$ and $X_K'$ are related via two log transforms along tori. 
\end{corollary}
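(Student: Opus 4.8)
The plan is to turn the statement into a purely $3$-dimensional surgery question and solve it there. Recall that
\[ X_K = (X \setminus N(T)) \cup_\phi \bigl(S^1 \times (S^3 \setminus N(K))\bigr), \qquad X_{K'} = (X \setminus N(T)) \cup_\phi \bigl(S^1 \times (S^3 \setminus N(K'))\bigr) \]
share the piece $X \setminus N(T)$ and the gluing $\phi$. Hence it suffices to produce a cobordism $V$, rel boundary, from $S^1 \times (S^3 \setminus N(K))$ to $S^1 \times (S^3 \setminus N(K'))$ that is built from exactly two round $2$-handles: gluing $V$ onto $I \times (X \setminus N(T))$ then yields a cobordism from $X_K$ to $X_{K'}$ with two round $2$-handles, which by Lemma \ref{round2islog} exhibits $X_{K'}$ as the result of two integral generalized logarithmic transforms on $X_K$.

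Next I would pass from the $4$-dimensional products to the $3$-manifolds themselves. Since $S^1$ times an $i$-handle is a round $i$-handle and the operation $S^1 \times (\,\cdot\,)$ is compatible with gluing, it is enough to find a cobordism $V'$, rel the torus boundary, from $S^3 \setminus N(K)$ to $S^3 \setminus N(K')$ consisting of exactly two $2$-handles; then $V = S^1 \times V'$ does the job. A cobordism built from two $2$-handles on $(S^3 \setminus N(K)) \times I$ is precisely the trace of a surgery on a two-component integer-framed link $L = c_1 \cup c_2$ in the interior of $S^3 \setminus N(K)$, and requiring the outgoing end to be $S^3 \setminus N(K')$ with its meridian--longitude marking preserved is, after capping off the boundary meridian, the same as asking for a two-component integer-framed link $L \subset S^3 \setminus K$ whose surgery yields $S^3$ again and carries the core $K$ to $K'$.

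So the whole corollary comes down to the following knot-theoretic input, which I expect to be the crux: \emph{any two knots $K$ and $K'$ in $S^3$ are related by surgery on a two-component framed link in their common complement}. The way I would attack this is to start from an unknotting/knotting sequence $K = K_0 \to K_1 \to \cdots \to K_n = K'$ of crossing changes (so that, as in the discussion preceding Corollary \ref{logrelevance}, $X_{K'}$ is already described from $X_K$ via $n$ log transforms along the null-homologous tori $S^1 \times \alpha_i$), and then \emph{collapse} all of the twisting data into two curves: route the relevant strands through a single full-twist region recorded by one component $c_1$ with a $1/n$-type slope, and use $c_2$, together with slam-dunk/Rolfsen bookkeeping, to restore the ambient manifold to $S^3$ after the composite surgery while tracking the image of $K$. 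Crucially, one should exploit the fact that the intermediate $3$-manifold (after $c_1$ alone) need not be $S^3$ -- this is exactly the extra room that makes two suffice and is unavailable to a single log transform: one log transform along a product torus would require a single surgery on a loop in $S^3 \setminus N(K)$ producing $S^3 \setminus N(K')$, forcing by Gordon--Luecke the surgery loop to be an unknot with slope $1/k$ and hence $K'$ to differ from $K$ only by iterated full twists along one circle, which fails for sufficiently complicated $K$.

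The delicate part, and the step I would spend the most care on, is precisely this $3$-manifold surgery step: bringing the component count down to two while simultaneously (a) guaranteeing that the final ambient is genuinely $S^3$ with distinguished core exactly $K'$, and (b) checking that the induced identification of $\partial N(K)$ with $\partial N(K')$ is the standard one, so that the resulting round-$2$-handle cobordism interpolates between the two knot-surgered manifolds and not between manifolds built with a different gluing. Everything else -- the reduction to $S^1 \times V'$ and the translation of round $2$-handles into log transforms via Lemma \ref{round2islog} -- is formal. As a byproduct this also makes transparent why the unknotting number $u(K)$ is not a lower bound: the naive count from crossing changes is $u(K) + u(K')$, but the collapsing argument replaces it by $2$.
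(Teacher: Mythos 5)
Your formal reductions are fine: a rel-boundary cobordism from $S^1 \times (S^3 \setminus N(K))$ to $S^1 \times (S^3 \setminus N(K'))$ built from two round $2$-handles would indeed glue to $I \times (X \setminus N(T))$ and, via Lemma \ref{round2islog}, exhibit $X_{K'}$ as the result of two log transforms on $X_K$. But the entire weight of your argument then rests on the claim you yourself flag as the crux: that for \emph{any} two knots $K, K'$ there is a two-component, integrally framed link in $S^3 \setminus N(K)$ whose surgery returns $S^3$, carries $K$ to $K'$, and induces the standard peripheral identification. This is left unproven, and the sketch offered does not amount to an argument. Routing all the strands involved in an unknotting sequence through one twist region does not realize independent crossing changes -- it introduces unwanted linking among the strands, which is exactly why a single twist curve is known to be insufficient in general (untwisting-number obstructions); no mechanism is given for how the second curve $c_2$ repairs this \emph{and} certifies that the final manifold is $S^3$ with distinguished core exactly $K'$. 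Moreover, a ``$1/n$-type slope'' on $c_1$ is not an integer framing, hence not a single $2$-handle (nor, after crossing with $S^1$, a single round $2$-handle); expanding it into integral surgeries by continued fractions generically increases the component count past two. So the key $3$-dimensional statement -- in effect a two-component surgery description relating arbitrary knots -- is a genuinely strong assertion that your proposal assumes rather than proves, and it is strictly stronger than what the corollary requires, since it would force both surgery tori to be product tori $S^1 \times c_i$ inside the knot-surgery piece.

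The paper sidesteps all of this. From the proof of Corollary \ref{logrelevance} (via the chain of $\pm 1$ log transforms along the null-homologous tori $S^1 \times \alpha_i$ and Theorem \ref{towertheorem}), $X_K$ and $X_{K'}$ become diffeomorphic after one stabilization in the non-spin case, so there is a cobordism between them with a single pair of $2$- and $3$-handles. Introducing one canceling $1$-/$2$-handle pair as in the proof of Theorem \ref{RoundCobordismThm}, forming round handles, and trading the resulting round $1$-handle for a round $2$-handle by Lemma \ref{KillRound1} (using simple connectivity) yields a cobordism with exactly two round $2$-handles, i.e.\ two log transforms; the spin case is handled by stabilizing with $S^2 \times S^2 \# S^1 \times S^3$ as in Proposition \ref{TrivialLog}. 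Note that in this argument the first log transform is a ``trivial'' one performed in a small ball (it changes the homeomorphism type by a connected sum with $S^2 \times S^2 \# S^1 \times S^3$), and neither torus is claimed to sit as a product torus in $S^1 \times (S^3 \setminus N(K))$ -- precisely the flexibility your approach gives up, and the reason the paper's proof closes while yours has a gap at its central step.
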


\begin{proof}
As seen in the proof of Corollary \ref{logrelevance}, there is a cobordism from $X_K$ to $X_{K'}$ with at most one pair of $2$- and $3$-handles when the manifolds are non-spin. We can modify this cobordism as in the proof of Theorem \ref{RoundCobordismThm} by introducing at most one canceling pair of $1$- and $2$-handles, and hence getting a round cobordism with at most two round $2$-handles between $X_K$ and $X_{K'}$. In the spin case, we can realize the cobordism with at most two log transforms using stabilization with $S^2 \x S^2 \# \, S^1 \x S^3$ as before.  
\end{proof}

\begin{remark}
The proof of this corollary mutadis mutandis gives that if the answer to Stern's Problem 14 were `yes', then any two $X$ and $X'$ would be related by at most two log transforms. 
\end{remark}

\vspace{0.1in}
\section{Broken Lefschetz fibrations and logarithmic transforms} \label{BLFs}

Since every closed smooth oriented $4$-manifold admits a broken Lefschetz fibration, it is natural to ask whether exotic smooth structures on a fixed homeomorphism class of a $4$-manifold can be related through modifications of broken Lefschetz fibrations. One might have various different cobordisms between two homeomorphic $4$-manifolds, and thus, there are various possible ways to realize such modifications. Underlying our preference for the ``fibered'' operations presented below are the types of cobordisms discussed in the previous section.


We first show that a generalized logarithmic transform is a standard logarithmic transform along a torus fiber component of a generalized fibration. Namely:

\begin{lemma} \label{LogAlongBLF}
Let $X$ be a closed smooth oriented $4$-manifold and $T_1, \ldots, T_n$ be disjointly embedded self-intersection zero tori in it. For any prescribed surgery composed of generalized logarithmic $p_i$-transform along $T_i$, for $i=1, \ldots, n$, there exists a broken Lefschetz fibration $f: X \to S^2$ with respect to which the surgery can be realized as a standard logarithmic $p_i$-transform along elliptic fiber components $T_i$ for all $i= 1, \ldots, n$ simultaneously.
\end{lemma}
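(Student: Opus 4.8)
The plan is to build a broken Lefschetz fibration on $X$ whose round singular circles are arranged so that each torus $T_i$ appears as a regular fiber (more precisely, as a component of a multiple fiber once the transform is carried out) of an elliptic-type piece of the fibration, so that the prescribed generalized $p_i$-transform literally becomes the local model \eqref{standardlog}. First I would recall that by the results of Saeki and the first author \cite{Sae, B}, any generic map $X \to S^2$ homotopes to a broken Lefschetz fibration; but rather than starting from an arbitrary one, I would construct a tailored fibration directly. The key input is that a tubular neighborhood $\nu T_i \cong D^2 \x T^2$ carries the obvious projection $D^2 \x T^2 \to D^2$ onto its first factor, which is a genuine (trivial) torus fibration, and the framing $\pi$ used in the definition of the $p_i$-transform is exactly such a projection. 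So the real content is to extend these $n$ disjoint local torus fibrations over the rest of $X$ through round and Lefschetz singularities.

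The steps, in order, would be: (i) Put $X \setminus \bigsqcup_i \nu T_i$ together with the fixed product fibrations on the $\nu T_i$ into a handle/round-handle picture — here I would lean on Proposition \ref{fibersum} and the round-handle philosophy of Section \ref{PreliminaryResults}: attaching round handles to $\bigsqcup \partial(\nu T_i)$ is precisely how fiber-sum-like fibrations are assembled, and a round $1$-handle or round $2$-handle attached compatibly adds a round singular circle to the fibration. (ii) Since each $T_i$ is null-(or arbitrarily) homologous with self-intersection zero and the fibration on $\nu T_i$ is the standard elliptic local model, I can cap off the base disks $D^2_i \subset S^2$ surrounding the images of the $T_i$ with the rest of $S^2$, filling in $X \setminus \bigsqcup \nu T_i$ by a broken Lefschetz fibration over the complementary planar surface, matching the boundary fibrations. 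This is where the abundance results are used: the complement, with prescribed boundary torus fibration, admits a BLF rel boundary, possibly after introducing extra round circles and Lefschetz critical points in the complement (which do not touch the $T_i$). (iii) Verify that with respect to this $f$, each $T_i = f^{-1}(z_i)$ for a regular value $z_i$, the chosen framing is $\pi = f$ near $T_i$, so performing the generalized $p_i$-transform with this data is, by definition of the standard log transform via the local model \eqref{standardlog}, a standard log transform along the elliptic fiber $T_i$; and since the $T_i$ are disjoint and the surgeries are supported in disjoint $\nu T_i$, they can all be done simultaneously. Invoke Lemma \ref{round2islog} to see that the resulting manifold is the same as the one obtained by the round $2$-handle description, keeping everything consistent.

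The main obstacle I expect is step (ii): extending the $n$ disjoint \emph{standard elliptic} local torus fibrations on the $\nu T_i$ to an honest broken Lefschetz fibration on all of $X$ while keeping the boundary behavior \emph{exactly} the product model $D^2 \x T^2 \to D^2$ (not merely fiber-preserving up to a twist of the $T^2$, which would alter the framing $\pi$ and hence the meaning of ``$p_i$-transform''). Controlling the framing is essential because the whole point of the lemma is that the \emph{generalized} transform — with its extra choice of surgery curve $\alpha$ and framing $\tau$ — collapses to the \emph{standard} one once the ambient fibration is chosen correctly; a mismatch in the boundary identification would reintroduce the generality we are trying to eliminate. I would handle this by first modifying the framing: as in Lemma \ref{round2islog}, one is free to pre-compose $\tau$ with a self-diffeomorphism $\varphi$ of $T^2$ carrying the surgery curve $\alpha$ to a standard generator, absorbing the choice of $\alpha$ into the identification, and then arrange the BLF on the complement to be compatible with this normalized boundary data. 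The remaining extension over the complement is then a soft existence statement of the type guaranteed by \cite{Sae, B}, applied relative to the boundary, together with the round-handle bookkeeping from Section \ref{PreliminaryResults}.
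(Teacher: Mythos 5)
Your overall strategy---build a broken Lefschetz fibration on $X$ whose restriction to each $\nu T_i$ is the chosen framing $\pi_i$, so that the prescribed generalized transforms become standard log transforms along elliptic fiber components---is the same as the paper's. The problem is with step (ii), which you correctly single out as the crux but then dispatch by appealing to \cite{Sae, B} ``applied relative to the boundary.'' What your set-up actually requires is an existence theorem for broken Lefschetz fibrations on the compact manifold $X \setminus \bigsqcup_i \nu T_i$ over the planar surface $S^2 \setminus \bigsqcup_i D^2_i$, relative to a prescribed torus fibration on all $n$ boundary three-tori; neither \cite{Sae} nor \cite{B} states such a rel--boundary result over a base with several boundary circles (the relative statement in \cite{B} is relative to a framed tubular neighborhood of a \emph{single} fiber), so as written this step is a genuine gap. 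A statement of the required type is available in Gay--Kirby \cite{GK}, which the paper itself invokes for exactly this kind of rel--boundary extension in the proof of Theorem \ref{BLFcobordismThm}(ii), so your route can be repaired by citing that instead. The paper's own proof of the lemma sidesteps the multi-boundary extension entirely by a small trick worth adopting: take the disjoint union $\pi = \sqcup_i \pi_i : N \to D^2$ of the framings, compose with the quotient map $r: D^2 \to S^2$ collapsing $\partial D^2$ to the north pole, and extend over $X$ by sending $X \setminus N$ to the north pole; the resulting continuous map is approximated by a generic map \emph{relative to} a slightly shrunk neighborhood $N_0$ and made indefinite by Saeki's elimination of definite folds, so that the relative existence result of \cite{B} applies verbatim and all the $T_i$ become components of the one fiber over the south pole, with $f = \pi_i$ near each $T_i$.

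Two smaller points. Your concern that the boundary identification must be the product model ``on the nose, not up to a twist of the $T^2$'' is largely moot: the framing in the paper's sense is the projection $\pi_i$, which is unchanged by fiberwise self-diffeomorphisms of the $T^2$ factor, and in the construction above the fibration near $T_i$ is literally $\pi_i$ because the approximation is performed rel $N_0$; the choice of surgery curve and direction is auxiliary data absorbed into the local model \eqref{standardlog}, so no normalization by $\varphi$ as in Lemma \ref{round2islog} is needed. Also, step (i) (Proposition \ref{fibersum} and round-handle bookkeeping) plays no role in the argument and can be dropped.
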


\begin{proof}
Let $\nu T_i$ be the normal bundle of the torus $T_i$ with framing $\pi_i:\nu T_i \to D^2$ for $i=1, \ldots, n$. Setting $N= \sqcup_{i=1}^n \nu T_i$, we have the map $\pi := \sqcup_{i=1}^n \pi_i: N \to D^2$. 

Let $r: D^2 \to S^2$ be the quotient map defined by collapsing $\partial D^2$ to a point. Assume that $r( \partial D^2) = \{\text{NP}\}$ and $r(0) = \{\text{SP}\}$. The composition $r \circ \pi$ is a surjective map from $N$ to $S^2$, which we can extend to all of $X$ by mapping $X \setminus N$ to $\{\text{NP}\}$ so as to get a surjective continuous map $g: X \to S^2$, which is smooth away from $g^{-1}(\{\text{NP}\})$. Letting $N_0$ be the preimage of a smaller disk neighborhood of the southern hemisphere under $g$, we can approximate  $g$ by a generic map $h: X \to S^2$ relative to $N_0$, which can then be modified to have only indefinite singularities using Saeki's construction \cite{Sae}. In turn we obtain a broken Lefschetz fibration \linebreak $f: X \to S^2$, where the \textit{framed tubular neighborhood} $N_0$ is the tubular neighborhood of the fiber $f^{-1}(\text{\{SP\}})$ as shown in \cite{B}, containing all $T_i$ as fiber components. Note that these fiber components can be null-homologous. Hence the prescribed surgery amounts to performing standard logarithmic transforms along the fiber components $T_i$ of $f: X \to S^2$.
\end{proof}

We are now ready to discuss some instances.

\begin{theorem} \label{BLFcobordismThm}
Let $X$ and $X'$ be two closed smooth oriented $4$-manifolds with the same euler characteristic and signature. \\
\indent \emph{(i)} If $X$ and $X'$ are simply-connected and equipped with broken Lefschetz fibrations $f: X \to S^2$ and $f': X' \to S^2$ respectively, then the latter can be obtained from the former via a sequence of modifications of broken Lefschetz fibrations, corresponding to logarithmic transforms and homotopies of broken Lefschetz fibrations. \\
\indent \emph{(ii)} If $X'$ is obtained from $X$ by performing generalized logarithmic $p_i$ transforms along disjointly embedded tori $T_i$ of self-intersection zero in $X$, for $i=1, \ldots, m$, then there exists a broken Lefschetz fibration $f': X' \to S^2$ obtained from a broken Lefschetz fibration $f: X \to S^2$ by standard logarithmic $p_i$-transforms along elliptic fiber components. 
\end{theorem}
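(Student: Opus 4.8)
The plan is to derive both parts from Lemma \ref{LogAlongBLF}, with Corollary \ref{related} supplying the chain of logarithmic transforms needed for part (i).

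\emph{Part (ii)} is essentially a repackaging of Lemma \ref{LogAlongBLF}. Given the disjointly embedded self-intersection zero tori $T_1,\dots,T_m\subset X$ and the prescribed generalized logarithmic $p_i$-transforms, Lemma \ref{LogAlongBLF} produces a broken Lefschetz fibration $f:X\to S^2$ with respect to which the $T_i$ are (possibly null-homologous) elliptic fiber components and for which the prescribed surgery is realized, simultaneously for all $i$, by standard logarithmic $p_i$-transforms. Carrying out these standard transforms replaces a tubular neighborhood of each $T_i$ by the fibered local model \eqref{standardlog}, which is a fibration over a disk exhibiting only the elliptic (indefinite-fold and Lefschetz) behaviour already present; hence the modified map $f':X'\to S^2$ is again a broken Lefschetz fibration, and $X'$ is by construction the prescribed log-transformed manifold. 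The one point to verify, namely that inserting the $\Z_{p_i}$-quotient models creates no singularities outside the broken Lefschetz list, is immediate from the explicit form of \eqref{standardlog}.

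\emph{Part (i).} Since $X$ and $X'$ are oriented with equal signature (hence oriented-cobordant, oriented cobordism in dimension four being detected by the signature) and equal euler characteristic, and $X$ is simply-connected, Corollary \ref{related} provides a chain
\[ X=X_0,\ X_1,\ \dots,\ X_k=X', \]
in which each $X_{j+1}$ is obtained from $X_j$ by a single integral generalized logarithmic transform along an embedded self-intersection zero torus $T_j\subset X_j$. For each $j$ I would apply Lemma \ref{LogAlongBLF} with $n=1$ to $X_j$ and $T_j$: this produces a broken Lefschetz fibration $\phi_j:X_j\to S^2$ in which $T_j$ is an elliptic fiber component, and carrying out the now-standard transform along $T_j$ yields a broken Lefschetz fibration $\psi_{j+1}:X_{j+1}\to S^2$. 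It then remains to interpolate, on each fixed manifold, between the fibration already in hand and the one Lemma \ref{LogAlongBLF} needs as input: between the given $f$ and $\phi_0$ on $X$; between $\psi_j$ and $\phi_j$ on $X_j$ for $1\le j\le k-1$; and between $\psi_k$ and the given $f'$ on $X'$. Splicing these together produces the asserted sequence of modifications, alternating homotopies of broken Lefschetz fibrations with standard logarithmic transforms.

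The interpolation step is the only non-formal ingredient, and I expect it to be the main obstacle. It requires the flexibility theory of broken Lefschetz fibrations --- Saeki's construction \cite{Sae} and \cite{B}, together with the now-standard moves relating such fibrations (birth/death of Lefschetz critical pairs, birth/merge/flip of round circles, and isotopies of the critical image) --- in order to join any two broken Lefschetz fibrations over $S^2$ on a fixed closed $4$-manifold through a one-parameter family that is a broken Lefschetz fibration away from finitely many parameter values, at each of which one such move takes place. A preliminary reduction is to arrange that the two fibrations represent the same homotopy class of maps to $S^2$; the discrepancy can be absorbed by an initial stabilization, which should itself be recorded as one of the permitted homotopy moves, after which the standard calculus of broken fibrations applies. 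Granting this flexibility, everything else is bookkeeping built on Lemma \ref{LogAlongBLF} and Corollary \ref{related}.
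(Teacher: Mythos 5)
There is a genuine gap in your part (ii), and it propagates into part (i). You assert that inserting the $\Z_{p_i}$-quotient models \eqref{standardlog} ``creates no singularities outside the broken Lefschetz list'' and that this is immediate from the explicit form. It is not: when $|p_i|\geq 2$ the transformed torus is a \emph{multiple} fiber, and near it the map $(D^2\times T^2)/\Z_m \to D^2/\Z_m \cong D^2$ is, in local coordinates, $(z,x,y)\mapsto z^{m}$, whose differential vanishes along the entire central torus. That locus is neither a regular point, nor a Lefschetz point, nor a round (indefinite fold) circle, so the transformed map is \emph{not} a broken Lefschetz fibration as defined here. This is exactly why the paper's proof of (ii) does not stop at Lemma \ref{LogAlongBLF}: it keeps the fibration outside the multiple-fiber neighborhoods and replaces each such neighborhood by an honest broken Lefschetz fibration over $D^2$, using Gay--Kirby's theorem \cite{GK} that a circle-valued Morse function without extrema on the boundary of a compact oriented $4$-manifold extends to a broken Lefschetz fibration over the disk. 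Without this repair (or an equivalent explicit local construction), your part (ii), and likewise the step in your part (i) claiming that the standard transform along $T_j$ ``yields a broken Lefschetz fibration $\psi_{j+1}$,'' does not go through.

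For part (i) your route also differs from the paper's, and its crux is precisely the ingredient you ``grant'': that any two broken Lefschetz fibrations on a fixed closed $4$-manifold can be joined through the move calculus. That is Williams' h-principle theorem \cite{W} (stated for homotopic fibrations); it does not follow from Saeki \cite{Sae} or \cite{B}, which give existence only, and your remark that a homotopy-class discrepancy ``can be absorbed by an initial stabilization'' is not an argument --- you would need to say which allowed modification realizes it. Moreover, your scheme needs this interpolation on \emph{every} intermediate manifold $X_j$ of the chain furnished by Corollary \ref{related}. The paper instead invokes Williams' theorem exactly once: using Proposition \ref{TrivialLog} it passes from $X$ and $X'$ by log transforms to a common stabilization $\hat X = X \,\#\, m\,(S^2\times S^2 \,\#\, S^1\times S^3) \cong X' \,\#\, m\,(S^2\times S^2 \,\#\, S^1\times S^3)$, extends $f$ and $f'$ over $\hat X$ by summing with the standard broken fibration on $S^2\times S^2 \,\#\, S^1\times S^3$, and then relates the two resulting fibrations on the single manifold $\hat X$ by \cite{W}. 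If you cite Williams' theorem explicitly (and address its homotopy hypothesis) and add the Gay--Kirby repair above, your outline could be completed, but as written the two central steps are asserted rather than proved.
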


\begin{proof}
Part (i): The results in the previous section show that there exists a trivial round cobordism $W \cup_{\phi} W'$ between $X$ and $X'$ such that $W$ is a cobordism from $X$ to $\hat{X}=X \# m \, (S^2 \x S^2 \# S^1 \x S^3)$ and $W'$, upside down, is a cobordism from $X'$ to $\hat{X'}=X' \# m \, (S^2 \x S^2 \# S^1 \x S^3)$, where $\phi: \hat{X} \to \hat{X'}$ is an orientation-reversing diffeomorphism. We can take the connected sum of the broken Lefschetz fibration $f: X \to S^2$ with the standard broken Lefschetz fibration on $S^2 \x S^2 \# S^1 \x S^3$ repeatedly $m$-times to get a broken Lefschetz fibration $\hat{f}: \hat{X} \to S^2$ (see \cite{B2} and Figure \ref{Simplelog} above), and similarly we can get a broken Lefschetz fibration $\hat{f'} : \hat{X'} \to S^2$. The latter, precomposed with $\phi$ gives a broken Lefschetz fibration $\phi \circ \hat{f'}: \hat{X} \to S^2$. By William's theorem from \cite{W}, these two fibrations on $\hat{X}$ are related via a sequence of moves between broken Lefschetz fibrations, concluding the statement of part (i).

Part (ii) follows from Lemma \ref{LogAlongBLF} above. The cobordism from $X$ to $X'$ is given by standard logarithmic $p_i$ transforms along $T_i$, for $i=1, \ldots, n$, yielding a new broken Lefschetz fibration $f': X' \to S^2$ with multiple fiber components $T_i$. Around each mutiple torus fiber we can replace the fibration $D^2 \x T^2 \to D^2$ with a multiple torus fiber over $0 \in D^2$ with a broken Lefschetz fibration. The existence of such a broken Lefschetz fibration is provided by Gay-Kirby's result where the authors show how to extend any circle valued morse function without extrema defined on the boundary of an arbitrary compact oriented $4$-manifold $Z$ to a broken Lefschetz fibration over $D^2$ on $Z$. \cite{GK}
\end{proof}

\begin{remark} \label{OtherBLFcobordisms}
Regarding part (i) of the theorem: A simpler cobordism between $X$ and $X'$ could be given by a sequence of $m$ $2-$ and $m$ $3-$ handle attachments, which correspond to connect summing $X$ (resp. $X'$) with $m$ copies of $S^2 \x S^2$ to get the middle manifold $\tilde{X}$ (resp. $\tilde{X}'$). If we start with two broken Lefschetz fibrations $f: X \to S^2$ and $f': X \to S^2$, then connect summing these fibrations with the standard fibration on $S^2 \x S^2$ $m$-times, and precomposing $f'$ with the diffeomorphism $\phi: \tilde{X} \to \tilde{X}'$ given by the cobordism, we get two broken Lefschetz fibrations $\tilde{f}$ and $\tilde{\phi} \circ \tilde{f}': \tilde{X} \to S^2$ as in the proof of Theorem \ref{BLFcobordismThm}. Now, William's result can be applied to relate these two broken Lefschetz fibrations by a sequence of fibered modifications, and thus giving yet another way to pass from $(X, f)$ to $(X', f')$ through modifications of broken Lefschetz fibrations.  

\noindent Regarding (ii): The assumptions of this theorem are the same as those given in the proposed extra assumption (1') on logarithmic transforms discussed in the previous section. 
\end{remark}

\begin{remark}
Let $W = I \x X$ be a \textit{trivial} cobordism, where $X$ is a closed, not necessarily simply-connected $4$-manifold. Gay and Kirby have recently constructed indefinite generic maps over $I \x S^2$ on $W$, connecting two prescribed broken Lefschetz fibrations (perturbed to indefinite generic maps) over $S^2$ on $\{0\} \x X$ and on $\{1\} \x X$. It would be interesting to see whether their arguments can be adapted to non-trivial round cobordisms we have considered in this article so as to improve our Theorem \ref{RoundCobordismThm}. 
\end{remark}


\vspace{0.2in}
\noindent \textit{Acknowledgements.} We would like to thank Dave Auckly for clarifications on his earlier work, Danny Ruberman and Ron Stern for their comments on a draft of this paper. We are greateful to Motoo Tange for bringing Iwase's result \cite{Iwase} to our attention, which we were not aware of at the time of posting the first version of our article. The first author was partially supported by the NSF grant DMS-0906912.

\vspace{0.2in}

\end{document}